\newtheorem{theorem}{Theorem}
\newtheorem{corollary}{Corollary}
\newtheorem{proposition}{Proposition}
\newtheorem{lemma}{Lemma}
\newtheorem{remark}{Remark}
\newcommand{\argmin}{\mathop{\rm argmin}}
\newcommand{\less}{\leqslant}
\newcommand{\gre}{\geqslant}
\newcommand{\wtilde}{\widetilde}
\newcommand*{\myfnsymbolsingle}[1]{%
	\ensuremath{%
		\ifcase#1
		\dagger
		\or 
		\ddagger
		\or 
		\mathsection
		\or 
		\mathparagraph
		\else 
		\@ctrerr  
		\fi
	}%
}   
\newalphalph{\myfnsymbolmult}[mult]{\myfnsymbolsingle}{}
\title{High-Dimensional Optimization in Adaptive Random Subspaces}
\author{Jonathan Lacotte\footnote{Department of Electrical Engineering, Stanford University} \qquad Mert Pilanci$^*$ \qquad Marco Pavone\footnote{Department of Aeronautics~\& $\!$Astronautics, Stanford University} }
\begin{document}
\maketitle
%
%
%
%
%
%
%
%
%
%
%
%
%
%
%
%
\begin{abstract}
    We propose a new randomized optimization method for high-dimensional problems which can be seen as a generalization of coordinate descent to random subspaces. We show that an adaptive sampling strategy for the random subspace significantly outperforms the oblivious sampling method, which is the common choice in the recent literature. The adaptive subspace can be efficiently generated by a correlated random matrix ensemble whose statistics mimic the input data. We prove that the improvement in the relative error of the solution can be tightly characterized in terms of the spectrum of the data matrix, and provide probabilistic upper-bounds. We then illustrate the consequences of our theory with data matrices of different spectral decay. Extensive experimental results show that the proposed approach offers significant speed ups in machine learning problems including logistic regression, kernel classification with random convolution layers and shallow neural networks with rectified linear units. Our analysis is based on convex analysis and Fenchel duality, and establishes connections to sketching and randomized matrix decomposition. 
\end{abstract}
\section{Introduction}
Random Fourier features, Nystrom method and sketching techniques have been successful in large scale machine learning problems. The common practice is to employ oblivious sampling or sketching matrices, which are typically randomized and fixed ahead of the time. However, it is not clear whether one can do better by adapting the sketching matrices to data. In this paper, we show that adaptive sketching matrices can significantly improve the approximation quality. We characterize the approximation error on the optimal solution in terms of the smoothness of the function, and spectral properties of the data matrix. 

Many machine learning problems end up being high dimensional optimization problems, which typically follow from forming the kernel matrix of a large dataset or mapping the data trough a high dimensional feature map, such as random Fourier features~\cite{rahimi2008random} or convolutional neural networks~\cite{goodfellow2016deep}. Such high dimensional representations induce higher computational and memory complexities, and result in slower training of the models. Random projections are a classical way of performing dimensionality reduction, and are widely used in many algorithmic contexts~\cite{vempala2005random}. Nevertheless, only recently these methods have captured great attention as an effective way of performing dimensionality reduction in convex optimization. In the context of solving a linear system $Ax=b$ and least-squares optimization, the authors of~\cite{gower2015randomized} propose a randomized iterative method with linear convergence rate, which, at each iteration, performs a proximal update $x^{(k+1)} \!=\! \argmin_{x \in T} {\|x - x^{(k)}\|}_2^2$, where the next iterate $x^{(k+1)}$ is restricted to lie within an affine subspace $T = x^{(k)} + \text{range}(A^\top S)$, and $S$ is a $n \times m$ dimension-reduction matrix with $m \less \min\{n, d\}$. In the context of kernel ridge regression, the authors of~\cite{yang2017randomized} propose to approximate the $n$-dimensional kernel matrix by sketching its columns to a lower $m$-dimensional subspace, chosen uniformly at random. From the low dimensional kernel ridge solution $\alpha^* \in \mathbb{R}^m$, they show how to reconstruct an approximation $\wtilde{x}\in \mathbb{R}^n$ of the high dimensional solution $x^* \in \mathbb{R}^n$. Provided that the sketching dimension $m$ is large enough -- as measured by the spectral properties of the kernel matrix $K$ --, the estimate $\wtilde{x}$ retains some statistical properties of $x^*$, e.g., minimaxity. Similarly, in the broader context of classification through convex loss functions, the authors of~\cite{zhang2013recovering, zhang2014random} propose to project the $d$-dimensional features of a given data matrix $A$ to a lower $m$-dimensional subspace, chosen independently of the data. After computing the optimal low-dimensional classifier $\alpha^* \in \mathbb{R}^m$, their algorithm returns an estimate $\wtilde{x}\in \mathbb{R}^d$ of the optimal classifier $x^* \in \mathbb{R}^d$. Even though they provide formal guarantees on the estimation error ${\|\wtilde{x}-x^*\|}_2$, their results rely on several restrictive assumptions, that is, the data matrix $A$ must be low rank, or, the classifier $x^*$ must lie in the span of the top few left singular vectors of $A$. Further, random subspace optimization has also been explored for large-scale trust region problems~\cite{vu2017random}, also using a subspace chosen uniformly at random. Our proposed approach draws connections with the Gaussian Kaczmarz method proposed in~\cite{gower2015randomized} and the kernel sketching method in~\cite{yang2017randomized}. Differently, we are interested in smooth, convex optimization problems with ridge regularization. In contrast to~\cite{zhang2013recovering, zhang2014random}, we do not make any assumption on the optimal solution $x^*$.

Our work relates to the considerable amount of literature on randomized approximations of high dimensional kernel matrices $K$. The typical approach consists of building a low-rank factorization of the matrix $K$, using a random subset of its columns~\cite{williams2001using, smola2004tutorial, drineas2005nystrom, kumar2012sampling}. The so-called Nystrom method has proven to be effective empirically~\cite{yang2012nystrom}, and many research efforts have been devoted to improving and analyzing the performance of its many variants (e.g., uniform column sub-sampling, leverage-score based sampling), especially in the context of kernel ridge regression~\cite{alaoui2015fast, bach2013sharp}. In a related vein, sketching methods have been proposed to reduce the space complexity of storing high-dimensional data matrices~\cite{pilanci2015randomized, yang2017randomized}, by projecting their rows to a randomly chosen lower dimensional subspace. Our theoretical findings build on known results for low-rank factorization of positive semi-definite (p.s.d.) matrices~\cite{halko2011, boutsidis2013, gittens2016revisiting, witten2015randomized}, and show intimate connections with kernel matrices sketching~\cite{yang2017randomized}. Lastly, our problem setting also draws connections with compressed sensing~\cite{candes2006stable} where the goal is to recover a high dimensional structured signal from a small number of randomized, and usually oblivious, measurements.

\subsection{Contributions}
In this work, we propose a novel randomized subspace optimization method with strong solution approximation guarantees which outperform oblivious sampling methods. We derive probabilistic bounds on the error of approximation for general convex functions. We show that our method provides a significant improvement over the oblivious version, and theoretically quantify this observation as function of the spectral properties of the data matrix. We also introduce an iterative version of our method, which converges to the optimal solution by iterative refinement.

\subsection{An overview of our results}

Let $f : \mathbb{R}^n \to \mathbb{R}$ be a convex and $\mu$-strongly smooth function, i.e., $\nabla^2 f(w) \preceq \mu I_n$ for all $w \in \mathbb{R}^n$, and $A \in \mathbb{R}^{n \times d}$ a high-dimensional matrix. We are interested in solving the \textit{primal} problem
\begin{equation}
\label{eq:primal}
x^* = \argmin_{x \in \mathbb{R}^d} f(Ax) + \frac{\lambda}{2} {\|x\|}_2^2\,,
\end{equation}
Given a random matrix $S \in \mathbb{R}^{d \times m}$ with $m \ll d$, we consider instead the \textit{sketched primal} problem
\begin{equation}
\label{eq:sketchedprimal}
\alpha^* \in \argmin_{\alpha \in \mathbb{R}^m} f(AS\alpha) + \frac{\lambda}{2} \alpha^\top S^\top S \alpha\,,
\end{equation}
where we effectively restrict the optimization domain to a lower $m$-dimensional subspace. In this work, we explore the following questions: How can we estimate the original solution $x^*$ given the sketched solution $\alpha^*$? Is a uniformly random subspace the optimal choice, e.g., $S\sim$ Gaussian i.i.d.? Or, can we come up with an adaptive sampling distribution that is related to the matrix $A$, which yields stronger guarantees? 

By Fenchel duality analysis, we exhibit a natural candidate for an approximate solution to $x^*$, given by $\wtilde{x} = -\lambda^{-1} A^\top \nabla f(AS\alpha^*)$. Our main result (Section~\ref{eq:mainresults}) establishes that, for an \textit{adaptive} sketching matrix of the form $S = A^\top \wtilde{S}$ where $\wtilde{S}$ is typically Gaussian i.i.d., the relative error satisfies a high-probability guarantee of the form ${\|\wtilde{x}-x^*\|}_2/{\|x^*\|}_2 \less \varepsilon$, with $\varepsilon < 1$. Our error bound $\varepsilon$ depends on the smoothness parameter $\mu$, the regularization parameter $\lambda$, the shape of the domain of the Fenchel conjugate $f^*$ and the spectral decay of the matrix $A$. Further, we show that this error can be explicitly controlled in terms of the singular values of $A$, and we derive concrete bounds for several standard spectral profiles, which arise in data analysis and machine learning. In particular, we show that using the \textit{adaptive} matrix $S=A^\top \wtilde{S}$ provides much stronger guarantees than \textit{oblivious} sketching, where $S$ is independent of $A$. Then, we take advantage of the error contraction (i.e., $\varepsilon < 1$), and extend our adaptive sketching scheme to an iterative version (Section~\ref{sec:algorithms}), which, after $T$ iterations, returns a higher precision estimate $\wtilde{x}^{(T)}$ that satisfies ${\|\wtilde{x}^{(T)}-x^*\|}_2 / {\|x^*\|}_2 \less \varepsilon^T$. Throughout this work, we specialize our formal guarantees and empirical evaluations (Section~\ref{sec:numericalexperiments}) to Gaussian matrices $\wtilde{S}$, which is a standard choice and yields the tightest error bounds. However, our approach extends to a broader class of matrices $\wtilde{S}$, such as Rademacher matrices, sub-sampled randomized Fourier (SRFT) or Hadamard (SRHT) transforms, and column sub-sampling matrices. Thus, it provides a general framework for random subspace optimization with strong solution guarantees.

\section{Convex optimization in adaptive random subspaces}
\label{eq:mainresults}

We introduce the Fenchel conjugate of $f$, defined as $f^*(z) := \sup_{w \in \mathbb{R}^n} \left\{ w^\top z - f(w) \right\}$, which is convex and its domain $\text{dom}f^* := \{z \in \mathbb{R}^n \mid f^*(z) < +\infty\}$ is a closed, convex set. Our control of the relative error ${\|\wtilde{x} - x^*\|}_2 / {\|x^*\|}_2$ is closely tied to controlling a distance between the respective solutions of the dual problems of~\eqref{eq:primal} and~\eqref{eq:sketchedprimal}. The proof of the next two Propositions follow from standard convex analysis arguments~\cite{rockafellar2015convex}, and are deferred to Appendix~\ref{app:proofs}.
\begin{proposition}[Fenchel Duality]
	\label{prop:fenchelduality}
	Under the previous assumptions on $f$, it holds that
	\begin{equation*}
	\min_x f(Ax) + \frac{\lambda}{2} {\|x\|}_2^2 = \max_z - f^*(z) - \frac{1}{2\lambda} {\|A^\top z\|}_2^2\,.
	\end{equation*}
	There exist an unique primal solution $x^*$ and an unique dual solution $z^*$. Further, we have $Ax^* \in \partial f^*(z^*)$, $z^* = \nabla f(Ax^*)$ and $x^* = -\frac{1}{\lambda} A^\top z^*$.
\end{proposition}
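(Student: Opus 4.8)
The plan is to recognize the primal objective as a composite convex program and invoke Fenchel--Rockafellar duality. Write the primal objective as $P(x) = f(Ax) + g(x)$ with $g(x) = \frac{\lambda}{2}{\|x\|}_2^2$. Since $f$ is finite and differentiable on all of $\mathbb{R}^n$, its domain is all of $\mathbb{R}^n$, so $\text{dom}(f \circ A) = \text{dom}(g) = \mathbb{R}^d$ and the qualification condition for zero duality gap (nonempty intersection of the relative interiors of the two domains) holds trivially. I would verify this first, since it is precisely what licenses strong duality and attainment of the dual optimum; the rest is computation. Using $f = f^{**}$ (valid as $f$ is closed, proper, convex), I would write $f(Ax) = \sup_z \{ (Ax)^\top z - f^*(z)\}$, so that $\min_x P(x) = \min_x \sup_z \{ z^\top A x - f^*(z) + \frac{\lambda}{2}{\|x\|}_2^2 \}$.

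Next I would exchange $\min_x$ and $\sup_z$, justified by the convex-in-$x$, concave-in-$z$ structure together with the qualification above. For fixed $z$ the inner problem $\min_x \{ z^\top A x + \frac{\lambda}{2}{\|x\|}_2^2\}$ is an unconstrained strongly convex quadratic with minimizer $x = -\frac{1}{\lambda} A^\top z$; substituting back gives $-\frac{1}{\lambda}{\|A^\top z\|}_2^2 + \frac{1}{2\lambda}{\|A^\top z\|}_2^2 = -\frac{1}{2\lambda}{\|A^\top z\|}_2^2$, which yields exactly the claimed dual $\max_z -f^*(z) - \frac{1}{2\lambda}{\|A^\top z\|}_2^2$. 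Equivalently, this is the Fenchel--Rockafellar dual with $g^*(y) = \frac{1}{2\lambda}{\|y\|}_2^2$.

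For existence and uniqueness, I would argue that $P$ is coercive (convexity forces an at-most-linear lower bound on $f$, which the quadratic term dominates) and $\lambda$-strongly convex, giving a unique primal minimizer $x^*$. Dually, $\mu$-strong smoothness of $f$ makes $f^*$ be $\frac{1}{\mu}$-strongly convex, so the dual objective is strongly concave and admits a unique maximizer $z^*$. Finally I would extract the optimality relations: primal stationarity reads $A^\top \nabla f(Ax^*) + \lambda x^* = 0$, i.e.\ $x^* = -\frac{1}{\lambda} A^\top \nabla f(Ax^*)$, which matches the inner-minimizer identity $x^* = -\frac{1}{\lambda} A^\top z^*$. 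To identify $z^* = \nabla f(Ax^*)$ rigorously, I would use the Fenchel--Young equality: the saddle-point stationarity in $z$ gives $Ax^* \in \partial f^*(z^*)$, which by the conjugate-subgradient correspondence is equivalent to $z^* \in \partial f(Ax^*) = \{\nabla f(Ax^*)\}$, the singleton following from smoothness of $f$.

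The main subtlety is justifying the $\min$--$\sup$ interchange, i.e.\ strong duality. I would emphasize that finiteness of $f$ on all of $\mathbb{R}^n$ does the heavy lifting here: with no domain constraint there is no separation or qualification obstruction, so the interchange (equivalently, Fenchel--Rockafellar duality) applies cleanly. The only remaining care is the subgradient bookkeeping needed to convert $Ax^* \in \partial f^*(z^*)$ into the gradient identity $z^* = \nabla f(Ax^*)$.
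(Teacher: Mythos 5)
Your proposal is correct and follows essentially the same route as the paper: the same splitting $f(Ax)+g(x)$ with $g=\tfrac{\lambda}{2}\|\cdot\|_2^2$, strong Fenchel--Rockafellar duality licensed by the full domains, uniqueness of $x^*$ from strong convexity of the primal and of $z^*$ from the $\tfrac{1}{\mu}$-strong convexity of $f^*$, and the optimality relations via $Ax^*\in\partial f^*(z^*)$ together with the conjugate-subgradient correspondence and primal stationarity. Your minimax/biconjugate derivation of the dual is just an explicit unpacking of the duality theorem the paper cites, so there is no substantive difference.
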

\begin{proposition}[Fenchel Duality on Sketched Program]
	\label{prop:fencheldualitysketchedprogram}
	Strong duality holds for the sketched program
	\begin{equation*}
	\min_{\alpha} f(AS\alpha) + \frac{\lambda}{2} {\|S\alpha\|}_2^2 = \max_y - f^*(y) - \frac{1}{2 \lambda} {\|P_S A^\top y\|}_2^2\,,
	\end{equation*}
	where $P_S = S(S^\top S)^\dagger S^\top$ is the orthogonal projector onto the range of $S$. There exist a sketched primal solution $\alpha^*$ and an unique sketched dual solution $y^*$. Further, for any solution $\alpha^*$, it holds that $AS\alpha^* \in \partial f^*(y^*)$ and $y^* = \nabla f(AS \alpha^*)$.
\end{proposition}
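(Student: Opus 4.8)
The plan is to reduce the sketched program to an instance of the \emph{primal} program of Proposition~\ref{prop:fenchelduality}, so that the claimed strong duality and optimality relations follow almost for free. The reason a naive application fails is twofold: the regularizer is $\frac{\lambda}{2}{\|S\alpha\|}_2^2$ rather than $\frac{\lambda}{2}{\|\alpha\|}_2^2$, and $S$ need not have full column rank, so $S^\top S$ may be singular. Both issues are resolved by re-parametrizing the optimization over the subspace $\mathrm{range}(S)$ using an orthonormal basis.

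Concretely, first I would let $U \in \mathbb{R}^{d\times r}$ be a matrix whose columns form an orthonormal basis of $\mathrm{range}(S)$, where $r = \mathrm{rank}(S)$, so that $U^\top U = I_r$ and $U U^\top = P_S$. Since $S\alpha$ ranges exactly over $\mathrm{range}(S) = \mathrm{range}(U)$ as $\alpha$ varies over $\mathbb{R}^m$, the substitution $S\alpha = U\beta$ (with $\beta = U^\top S\alpha$ ranging over all of $\mathbb{R}^r$) identifies the sketched primal with
\begin{equation*}
\min_{\beta \in \mathbb{R}^r} f(AU\beta) + \frac{\lambda}{2}{\|U\beta\|}_2^2 = \min_{\beta \in \mathbb{R}^r} f\big((AU)\beta\big) + \frac{\lambda}{2}{\|\beta\|}_2^2,
\end{equation*}
where I used ${\|U\beta\|}_2^2 = \beta^\top U^\top U \beta = {\|\beta\|}_2^2$. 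This is precisely the primal problem~\eqref{eq:primal} with data matrix $AU \in \mathbb{R}^{n\times r}$ in place of $A$; since $f$ is finite and $\mu$-strongly smooth on all of $\mathbb{R}^n$, it is closed and convex, so Proposition~\ref{prop:fenchelduality} applies verbatim.

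Next I would invoke Proposition~\ref{prop:fenchelduality} for the data matrix $AU$. It yields strong duality with dual objective $-f^*(y) - \frac{1}{2\lambda}{\|(AU)^\top y\|}_2^2$, a unique dual optimum $y^*$, and a primal optimum $\beta^*$ satisfying $AU\beta^* \in \partial f^*(y^*)$ and $y^* = \nabla f(AU\beta^*)$. To match the claimed form I would simplify
\begin{equation*}
{\|(AU)^\top y\|}_2^2 = y^\top A U U^\top A^\top y = y^\top A P_S A^\top y = {\|P_S A^\top y\|}_2^2,
\end{equation*}
using that $P_S = U U^\top$ is symmetric and idempotent. This is exactly the sketched dual of Proposition~\ref{prop:fencheldualitysketchedprogram}, and its maximizer $y^*$ is unique.

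Finally I would translate the optimality conditions back to the $\alpha$-variables. Because $\mathrm{range}(U) = \mathrm{range}(S)$, there exists $\alpha^* \in \mathbb{R}^m$ with $S\alpha^* = U\beta^*$; such an $\alpha^*$ is a minimizer of the sketched primal and satisfies $AS\alpha^* = AU\beta^*$, whence $AS\alpha^* \in \partial f^*(y^*)$ and $y^* = \nabla f(AS\alpha^*)$. Note that $\alpha^*$ need not be unique when $S$ is rank-deficient (the null space of $S$ is free), which is why the statement only asserts \emph{existence} of a sketched primal solution while the dual solution $y^*$ is unique. The main point requiring care is precisely this rank-deficient case: the consistent use of the orthonormal basis $U$ is what sidesteps the singularity of $S^\top S$ and certifies that the projector appearing in the dual is exactly $P_S$. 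An equivalent but more computational route would derive the dual directly by writing $f(AS\alpha)=\sup_y\{y^\top AS\alpha - f^*(y)\}$, exchanging $\min$ and $\max$, and minimizing the resulting quadratic in $\alpha$ via the pseudoinverse identity $(S^\top S)^\dagger S^\top S (S^\top S)^\dagger = (S^\top S)^\dagger$.
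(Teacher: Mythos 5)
Your proof is correct, but it takes a genuinely different route from the paper's. The paper applies the Fenchel duality theorem (Rockafellar, Thm.~31.2) directly to the pair $f\circ(AS)$ and $g(\alpha)=\frac{\lambda}{2}\|S\alpha\|_2^2$, computes the conjugate $g^*(v)=\frac{1}{2\lambda}v^\top(S^\top S)^\dagger v$ to obtain the $\|P_SA^\top y\|_2^2$ term, and then establishes existence of a primal minimizer separately by splitting $\alpha=\alpha_\perp+\alpha_\parallel$ along $\ker(S)^\perp\oplus\ker(S)$ and using strong convexity on $\ker(S)^\perp$. You instead re-parametrize over an orthonormal basis $U$ of $\mathrm{range}(S)$ and reduce the whole statement to Proposition~\ref{prop:fenchelduality} applied to the data matrix $AU$. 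Your approach buys reuse of the already-proved proposition, sidesteps the conjugate computation for the degenerate quadratic (where the paper is in fact slightly imprecise about $\mathrm{dom}\,g^*$), and makes the uniqueness of $y^*$ versus the non-uniqueness of $\alpha^*$ transparent; the paper's approach is more direct and produces the pseudoinverse form of the dual without choosing a basis. One small point you should make explicit: the statement asserts the optimality relations for \emph{any} minimizer $\alpha^*$, so you should note that for any such $\alpha^*$ the point $\beta=U^\top S\alpha^*$ is a minimizer of the reduced problem and hence equals the unique $\beta^*$, giving $S\alpha^*=U\beta^*$; this is immediate from your construction but is currently only stated for the particular $\alpha^*$ you exhibit.
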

We define the following deterministic functional $Z_{f}$ which depends on $f^*$, the data matrix $A$ and the sketching matrix $S$, and plays an important role in controlling the approximation error,
\begin{equation}
\label{eq:Zfdefinition}
Z_f \equiv Z_f(A, S) = \sup_{\Delta \in (\text{dom}f^* - z^*)} \left(\frac{\Delta^\top A P_{S}^\perp A^\top \Delta}{{\|\Delta\|}_2^2}\right)^{\frac{1}{2}},
\end{equation}
where $P_S^\perp = I - P_S$ is the orthogonal projector onto ${\text{range}(S)}^\perp$. The relationship $x^* = -\lambda^{-1} A^\top \nabla f(Ax^*)$ suggests the point $\wtilde{x} = -\lambda^{-1} A^\top \nabla f(AS\alpha^*)$ as a candidate for approximating $x^*$. The Fenchel dual programs of~\eqref{eq:primal} and~\eqref{eq:sketchedprimal} only differ in their quadratic regularization term, ${\|A^\top z\|}_2^2$ and ${\|P_S A^\top y\|}_2^2$, which difference is tied to the quantity ${\|P_S^\perp A^\top (z-y)\|}_2$. As it holds that ${\|\wtilde{x}-x^*\|}_2 = \lambda^{-1} {\|A^\top(z^*-y^*)\|}_2$, we show that the error ${\|\wtilde{x} - x^*\|}_2$ can be controlled in terms of the spectral norm ${\|P_S^\perp A^\top\|}_2$, or more sharply, in terms of the quantity $Z_f$, which satisfies $Z_f \less {\|P^\perp_S A^\top\|}_2$. We formalize this statement in our next result, which proof is deferred to Appendix~\ref{app:prooftheorem1}.
\begin{theorem}[Deterministic bound]
	\label{th:main}
	Let $\alpha^*$ be any minimizer of the sketched program~\eqref{eq:sketchedprimal}. Then, under the condition $\lambda \gre 2 \mu Z_f^2$, we have
	\begin{equation}
	\label{eq:mainbound}
	{\|\wtilde{x}-x^*\|}_2 \less \sqrt{\frac{\mu}{2\lambda}} Z_f {\|x^*\|}_2\,,\\
	\end{equation}
	which further implies
	\begin{equation}
	\label{eq:mainbound2}
	{\|\wtilde{x}-x^*\|}_2 \less \sqrt{\frac{\mu}{2\lambda}} {\|P_{S}^\perp A^\top\|}_2 {\|x^*\|}_2\,.
	\end{equation}
\end{theorem}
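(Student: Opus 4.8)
The plan is to reduce the statement to a bound on $\|A^\top(z^*-y^*)\|_2$ and then exploit the strong convexity of the Fenchel conjugate $f^*$. Writing $\Delta := y^*-z^*$, Propositions~\ref{prop:fenchelduality} and~\ref{prop:fencheldualitysketchedprogram} together with $\wtilde x = -\lambda^{-1}A^\top\nabla f(AS\alpha^*) = -\lambda^{-1}A^\top y^*$ give $\|\wtilde x - x^*\|_2 = \lambda^{-1}\|A^\top\Delta\|_2$ and $\|x^*\|_2 = \lambda^{-1}\|A^\top z^*\|_2$. Hence it suffices to prove $\|A^\top\Delta\|_2 \less \sqrt{\mu/(2\lambda)}\,Z_f\,\|A^\top z^*\|_2$. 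I would first record two facts. Since $f$ is $\mu$-strongly smooth, $f^*$ is $\tfrac1\mu$-strongly convex, so its subdifferential is $\tfrac1\mu$-strongly monotone. Negating the two dual objectives turns the dual programs into the minimizations of $f^*(z)+\tfrac1{2\lambda}\|A^\top z\|_2^2$ and $f^*(y)+\tfrac1{2\lambda}\|P_SA^\top y\|_2^2$; their first-order optimality conditions read $g_z := -\lambda^{-1}AA^\top z^* \in \partial f^*(z^*)$ and $g_y := -\lambda^{-1}AP_SA^\top y^* \in \partial f^*(y^*)$ (using $P_S^\top P_S = P_S$).

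I would then insert these subgradients into strong monotonicity, $\langle g_y - g_z,\,\Delta\rangle \gre \tfrac1\mu\|\Delta\|_2^2$. Abbreviating $u := A^\top z^*$ and $v := A^\top y^*$ and splitting every vector into its $P_S$ and $P_S^\perp$ components, the cross terms between the two orthogonal ranges cancel and the left-hand side collapses to $-\lambda^{-1}(\|P_SA^\top\Delta\|_2^2 + \|P_S^\perp u\|_2^2 - \langle P_S^\perp u, P_S^\perp v\rangle)$. Multiplying through by $-\lambda$ and using $P_S^\perp v - P_S^\perp u = P_S^\perp A^\top\Delta$ yields the key inequality $\|P_SA^\top\Delta\|_2^2 + \tfrac\lambda\mu\|\Delta\|_2^2 \less \langle P_S^\perp u,\,P_S^\perp A^\top\Delta\rangle \less \|P_S^\perp u\|_2\,p$, where $p := \|P_S^\perp A^\top\Delta\|_2$ and the last step is Cauchy--Schwarz.

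The role of $Z_f$ enters next. Both dual optima lie in $\mathrm{dom}f^*$, so $\Delta \in \mathrm{dom}f^* - z^*$ is admissible in~\eqref{eq:Zfdefinition}, giving $p \less Z_f\|\Delta\|_2$, i.e. $\|\Delta\|_2^2 \gre p^2/Z_f^2$. Writing $\|A^\top\Delta\|_2^2 = \|P_SA^\top\Delta\|_2^2 + p^2$ and substituting the key inequality to eliminate $\|P_SA^\top\Delta\|_2^2$ and $\|\Delta\|_2^2$ leaves $\|A^\top\Delta\|_2^2 \less \|P_S^\perp u\|_2\,p - (\tfrac{\lambda}{\mu Z_f^2}-1)\,p^2$. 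I expect the constant to be the main obstacle: bounding $p$ and the $P_S$-component separately loses a factor of $\sqrt3$, so instead I would read the right-hand side as a downward parabola in the scalar $p$ and maximize it freely, which gives $\|A^\top\Delta\|_2^2 \less \tfrac{\mu Z_f^2}{4(\lambda - \mu Z_f^2)}\|P_S^\perp u\|_2^2$. The hypothesis $\lambda \gre 2\mu Z_f^2$ guarantees $\lambda - \mu Z_f^2 \gre \lambda/2$ (so the parabola genuinely opens downward), sharpening the prefactor to $\tfrac{\mu Z_f^2}{2\lambda}$. Finally $\|P_S^\perp u\|_2 \less \|u\|_2 = \|A^\top z^*\|_2$, and dividing by $\lambda$ produces exactly~\eqref{eq:mainbound}.

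The implication~\eqref{eq:mainbound2} is then immediate: since $\|P_S^\perp A^\top\Delta\|_2 \less \|P_S^\perp A^\top\|_2\|\Delta\|_2$ for every $\Delta$, taking the supremum in~\eqref{eq:Zfdefinition} shows $Z_f \less \|P_S^\perp A^\top\|_2$, and substituting this into~\eqref{eq:mainbound} gives the coarser spectral-norm bound under the same hypothesis.
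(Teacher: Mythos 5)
Your proof is correct and follows essentially the same route as the paper's: both extract the optimality conditions of the two Fenchel dual programs, combine them with the $\tfrac{1}{\mu}$-strong monotonicity of $\partial f^*$ to reach the key inequality $\|P_S A^\top\Delta\|_2^2+\tfrac{\lambda}{\mu}\|\Delta\|_2^2\less\langle P_S^\perp A^\top z^*,P_S^\perp A^\top\Delta\rangle$, invoke the definition of $Z_f$ at $\Delta=y^*-z^*$, and finish with Cauchy--Schwarz and the identities $A^\top\Delta=-\lambda(\wtilde{x}-x^*)$, $A^\top z^*=-\lambda x^*$. The only (cosmetic) difference is the last algebraic step — you maximize a downward parabola in $\|P_S^\perp A^\top\Delta\|_2$ whereas the paper applies AM--GM to $\tfrac{\lambda}{2\mu}\|\Delta\|_2^2+\|A^\top\Delta\|_2^2$ — and both yield the same constant $\sqrt{\mu/(2\lambda)}\,Z_f$.
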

For an adaptive sketching matrix $S=A^\top\wtilde{S}$, we rewrite ${\|P_{S}^\perp A^\top\|}^2_2={\|K - K \wtilde{S} (\wtilde{S}^\top K \wtilde{S})^\dagger \wtilde{S}^\top K\|}_2$, where $K = AA^\top$ is p.s.d. Combining our deterministic bound~\eqref{eq:mainbound2} with known results~\cite{halko2011, gittens2016revisiting, boutsidis2013} for randomized low-rank matrix factorization in the form $K\wtilde{S} (\wtilde{S}^\top K \wtilde{S})^\dagger \wtilde{S}^\top K$ of p.s.d.~matrices $K$, we can give guarantees with high probability (w.h.p.) on the relative error for various types of matrices $\wtilde{S}$. For conciseness, we specialize our next result to adaptive Gaussian sketching, i.e., $\wtilde{S}$ Gaussian i.i.d. Given a target rank $k \gre 2$, we introduce a measure of the spectral tail of $A$ as $R_{k}(A) = \left(\sigma^2_k + \frac{1}{k} \sum_{j = k+1}^\rho \sigma^2_{j}\right)^\frac{1}{2}$, where $\rho$ is the rank of the matrix $A$ and $\sigma_1 \gre \sigma_2 \gre \hdots \gre \sigma_\rho$ its singular values. The proof of the next result follows from a combination of Theorem~\ref{th:main} and Corollary~$10.9$ in~\cite{halko2011}, and is deferred to Appendix~\ref{app:proofcorollary1}.

\begin{corollary}[High-probability bound]
	\label{corollary:main}
	Given $k \less \min(n,d)/2$ and a sketching dimension $m=2k$, let $S = A^\top \wtilde{S}$, with $\wtilde{S} \in \mathbb{R}^{n \times m}$ Gaussian i.i.d. Then, for some universal constant $c_0 \less 36$, provided $\lambda \gre 2 \mu c^2_0 R_k^2(A)$, it holds with probability at least $1-12e^{-k}$ that
	\begin{equation}
	\label{eq:probabilisticbound}
	{\|\wtilde{x} - x^*\|}_2 \less c_0 \sqrt{\frac{\mu}{2\lambda}} R_k(A) {\|x^*\|}_2\,.
	\end{equation}
\end{corollary}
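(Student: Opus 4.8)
The plan is to combine the deterministic guarantee of Theorem~\ref{th:main} with a high-probability bound on the random quantity ${\|P_S^\perp A^\top\|}_2$, obtained from the randomized range-finder theory underlying Corollary~10.9 of~\cite{halko2011}. The natural starting point is the second conclusion~\eqref{eq:mainbound2} of Theorem~\ref{th:main}, namely ${\|\wtilde{x}-x^*\|}_2 \le \sqrt{\mu/(2\lambda)}\,{\|P_S^\perp A^\top\|}_2\,{\|x^*\|}_2$, which is valid whenever the admissibility condition $\lambda \ge 2\mu Z_f^2$ holds. Since $Z_f \le {\|P_S^\perp A^\top\|}_2$, it suffices to exhibit a high-probability event on which ${\|P_S^\perp A^\top\|}_2 \le c_0 R_k(A)$: on that event the hypothesis $\lambda \ge 2\mu c_0^2 R_k^2(A)$ forces $\lambda \ge 2\mu Z_f^2$, so Theorem~\ref{th:main} may be invoked, and substituting the spectral-norm bound into~\eqref{eq:mainbound2} yields the claim.

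Next I would recast ${\|P_S^\perp A^\top\|}_2$ as a randomized range-finder error. Writing $B = A^\top$ and letting $Q$ denote an orthonormal basis for the range of $B\wtilde{S} = S$, the projector $P_S$ equals $QQ^\top$, so that ${\|P_S^\perp A^\top\|}_2 = {\|(I - QQ^\top)B\|}_2$, consistent with the identity ${\|P_S^\perp A^\top\|}_2^2 = {\|K - K\wtilde{S}(\wtilde{S}^\top K \wtilde{S})^\dagger \wtilde{S}^\top K\|}_2$ recorded before the statement. This is exactly the spectral-norm error of approximating $B$ by its projection onto the column space of the sketch $B\wtilde{S}$, with $\wtilde{S} \in \mathbb{R}^{n \times m}$ a Gaussian test matrix and $m = 2k$. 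Because $B = A^\top$ shares the singular values $\sigma_1 \ge \cdots \ge \sigma_\rho$ of $A$, I can apply Corollary~10.9 of~\cite{halko2011} with target rank $k$ and oversampling parameter $p = m - k = k$.

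That corollary bounds ${\|(I-QQ^\top)B\|}_2$ by a weighted combination of $\sigma_{k+1}$ and $\big(\sum_{j>k}\sigma_j^2\big)^{1/2}$, where the weights depend on two deviation parameters governing, respectively, a polynomial and a Gaussian tail. Setting $p = k$ makes the leading coefficients absolute constants, and both resulting terms are dominated by $R_k(A) = \big(\sigma_k^2 + \tfrac1k\sum_{j>k}\sigma_j^2\big)^{1/2}$, since $\sigma_{k+1} \le \sigma_k \le R_k(A)$ and $\tfrac1{\sqrt{k}}\big(\sum_{j>k}\sigma_j^2\big)^{1/2} \le R_k(A)$. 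It remains to choose the two deviation parameters so as to (i) collapse the combined failure probability to at most $12e^{-k}$ and (ii) keep the overall multiplicative constant below $36$; taking the polynomial-tail parameter to be a fixed absolute constant and the Gaussian-tail parameter of order $\sqrt{k}$ produces the exponential tail $e^{-k}$ while leaving the prefactor bounded.

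The main obstacle is precisely this final bookkeeping: propagating the explicit constants and the two independent tail parameters of Corollary~10.9 through the substitution $p = k$, so that the stated values $c_0 \le 36$ and failure probability $12e^{-k}$ emerge cleanly in place of the generic $t^{-p}$ and $e^{-u^2/2}$ tails. Once the event ${\|P_S^\perp A^\top\|}_2 \le c_0 R_k(A)$ is secured, the admissibility condition is automatic, Theorem~\ref{th:main} applies, and the bound~\eqref{eq:probabilisticbound} follows from~\eqref{eq:mainbound2}.
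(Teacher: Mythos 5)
Your proposal is correct and follows essentially the same route as the paper: the paper likewise restates Corollary~10.9 of~\cite{halko2011} as a lemma giving ${\|P_S^\perp A^\top\|}_2 \less c_0 R_k(A)$ with probability $1-6e^{-k}$ for $m=2k$, uses $Z_f \less {\|P_S^\perp A^\top\|}_2$ to convert the hypothesis $\lambda \gre 2\mu c_0^2 R_k^2(A)$ into the admissibility condition $\lambda \gre 2\mu Z_f^2$ on the good event, and then substitutes into~\eqref{eq:mainbound2}. The only cosmetic difference is the probability bookkeeping: the paper spends a union bound over two copies of the same concentration event to reach $1-12e^{-k}$, whereas you condition on a single event and absorb the slack into the tail parameters of the Halko corollary, which is equally valid.
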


\begin{remark}
	The quantity $Z_f := \sup_{\Delta \in (\text{dom}f^* - z^*)} \left(\frac{\Delta^\top A P_{S}^\perp A^\top \Delta}{\|\Delta\|_2^2}\right)^{\frac{1}{2}}$ is the eigenvalue of the matrix $P_S^\perp A^\top$, restricted to the spherical cap $\mathcal{K} := (\text{dom} f^* - z^*) \cap \mathcal{S}^{n-1}$, where $\mathcal{S}^{n-1}$ is the unit sphere in dimension $n$. Thus, depending on the geometry of $\mathcal{K}$, the deterministic bound~\eqref{eq:mainbound} might be much tighter than~\eqref{eq:mainbound2}, and yield a probabilistic bound better than~\eqref{eq:probabilisticbound}. The investigation of such a result is left for future work.
\end{remark}

\subsection{Theoretical predictions as a function of spectral decay}
\label{subsec:theoreticalpredictionsdecay}

We study the theoretical predictions given by~\eqref{eq:mainbound2} on the relative error, for different spectral decays of $A$ and sketching methods, in particular, adaptive Gaussian sketching versus oblivious Gaussian sketching and leverage score column sub-sampling~\cite{gittens2016revisiting}. We denote $\nu_k = \sigma_k^2$ the eigenvalues of $A A^\top$. For conciseness, we absorb $\mu$ into the eigenvalues by setting $\nu_k \equiv \mu \nu_k$ and $\mu \equiv 1$. This re-scaling leaves the right-hand side of the bound~\eqref{eq:mainbound2} unchanged, and does not affect the analysis below. Then, we assume that $\nu_1= \mathcal{O}(1)$, $\lambda \in (\nu_\rho, \nu_1)$, and $\lambda \to 0$ as $n \to +\infty$. These assumptions are standard in empirical risk minimization and kernel regression methods~\cite{friedman2001elements}, which we focus on in Sections~\ref{sec:applications} and~\ref{sec:numericalexperiments}. We consider three decaying schemes of practical interest. The matrix $A$ has either a finite-rank $\rho$, a $\kappa$-exponential decay where $\nu_j \sim e^{-\kappa j}$ and $\kappa > 0$, or, a $\beta$-polynomial decay where $\nu_k \sim j^{-2\beta}$ and $\beta > 1/2$. Among other examples, these decays are characteristic of various standard kernel functions, such as the polynomial, Gaussian and first-order Sobolev kernels~\cite{kernelprobastats}. Given a precision $\varepsilon > 0$ and a confidence level $\eta \in (0,1)$, we denote by $m_A$ (resp. $m_O$, $m_S$) a sufficient dimension for which adaptive (resp. oblivious, leverage score) sketching yields the following $(\varepsilon, \eta)$-guarantee on the relative error. That is, with probability at least $1-\eta$, it holds that $\|\wtilde{x}-x^*\|_2/\|x^*\|_2 \less \varepsilon$. 

We determine $m_A$ from our probabilistic regret bound~\eqref{eq:probabilisticbound}. For $m_S$, using our deterministic regret bound~\eqref{eq:mainbound2}, it then suffices to bound the spectral norm ${\|P_{A^\top\wtilde{S}}^\perp A^\top\|}_2$ in terms of the eigenvalues $\nu_k$, when $\wtilde{S}$ is a leverage score column sub-sampling matrix. To the best of our knowledge, the tightest bound has been given by~\cite{gittens2016revisiting} (see Lemma~$5$). For $m_O$, we leverage results from~\cite{zhang2013recovering}. The authors provide an upper bound on the relative error $\|\wtilde{x}-x^*\|_2/\|x^*\|_2$, when $S$ is Gaussian i.i.d.~with variance $\frac{1}{d}$. It should be noted that their sketched solution $\alpha^*$ is slightly different from ours. They solve $\alpha^* = \argmin f(AS\alpha) + (2\lambda)^{-1}\|\alpha\|_2^2$, whereas we do include the matrix $S$ in the regularization term. One might wonder which regularizer works best when $S$ is Gaussian i.i.d. Through extensive numerical simulations, we observed a strongly similar performance. Further, standard Gaussian concentration results yields that ${\|S\alpha^*\|}_2^2 \approx {\|\alpha^*\|}_2^2$.

Our theoretical findings are summarized in Table~\ref{tab:comparisonsketchingdimensions}, and we give the mathematical details of our derivations in Appendix~\ref{app:proofboundsdecays}. For the sake of clarity, we provide in Table~\ref{tab:comparisonsketchingdimensions} lower bounds on the predicted values $m_O$ and $m_S$, and, thus, lower bounds on the ratios $m_O / m_A$ and $m_S/m_A$. Overall, adaptive Gaussian sketching provides stronger guarantees on the relative error $\|\wtilde{x}-x^*\|_2/\|x^*\|_2$.

\begin{table}[h!]
	\caption{Sketching dimensions for a $(\varepsilon, \eta)$-guarantee on the relative error $\|\wtilde{x}-x^*\|_2/\|x^*\|_2$.}
	\centering
	\begin{small}
		\label{tab:comparisonsketchingdimensions}
		\begin{tabular}{|c|c|c|c|}
			\hline
			& $\rho$-rank matrix & $\kappa$-exponential decay & $\beta$-polynomial decay  \\
			& ($\rho \ll n \wedge d$) & ($\kappa > 0$) & ($\beta > 1/2$)\\
			\hline
			Adaptive Gaussian ($m_A$) & $\rho+1+\log\left(\frac{12}{\eta}\right)$ & $\kappa^{-1}\log\left(\frac{1}{\lambda \varepsilon}\right)+\log\left(\frac{12}{\eta}\right)$ & $\lambda^{-\nicefrac{1}{2\beta}}\varepsilon^{-\nicefrac{1}{\beta}} + \log\left(\frac{12}{\eta}\right)$ \\
			Oblivious Gaussian ($m_O$) & $(\rho + 1)\varepsilon^{-2}\log\left(\frac{2\rho}{\eta}\right)$ & $\kappa^{-1}\varepsilon^{-2}\log\left(\frac{1}{\lambda}\right)\log\left(\frac{2d}{\eta}\right)$ & $\lambda^{-\frac{1}{2\beta}} \varepsilon^{-2} \log\left(\frac{2d}{\eta}\right)$ \\
			Leverage score ($m_S$) & $(\rho+1)\log\left(\frac{4\rho}{\eta}\right)$ & $\kappa^{-1}\log\left(\frac{1}{\lambda \varepsilon}\right)\log\left(\frac{1}{\eta}\right)$ & $\left(\lambda^{-\frac{1}{2\beta}}\varepsilon^{-\frac{1}{\beta}}\right)^{2\wedge \frac{\beta}{\beta-1}} \log\left(\frac{1}{\eta}\right)$ \\
			\hline
			Lower bound on $\frac{m_O}{m_A}$ & $\varepsilon^{-2} \log \rho$ & $\varepsilon^{-2+h} \log 2d $, $\quad \forall h > 0$ & $\varepsilon^{\nicefrac{1}{\beta} - 2} \log(2d/\eta)$\\
			Lower bound on $\frac{m_S}{m_A}$ & $\log \rho$ & $\min\left(\log\left(\frac{1}{\eta}\right), \kappa^{-1} \log\left(\frac{1}{\lambda \varepsilon}\right)\right)$ & $\left(\lambda^{-\frac{1}{2\beta}}\varepsilon^{-\frac{1}{\beta}}\right)^{-1+2\wedge \frac{\beta}{\beta-1}}$ \\
			\hline
		\end{tabular}
	\end{small}
\end{table}
We illustrate numerically our predictions for adaptive Gaussian sketching versus oblivious Gaussian sketching. With $n=1000$ and $d=2000$, we generate matrices $A^{\text{exp}}$ and $A^{\text{poly}}$, with spectral decay satisfying respectively $\nu_j \sim n e^{-0.1 j}$ and $\nu_j \sim n j^{-2}$. First, we perform binary logistic regression, with $f(Ax) = n^{-1} \sum_{i=1}^n \ell_{y_i}(a_i^\top x)$ where $\ell_{y_i}(z) = y_i \log(1+e^{-z}) + (1-y_i) \log(1+e^{z})$, $y \in \{0,1\}^n$ and $a_i$ is the $i$-th row of $A$. For the polynomial (resp. exponential) decay, we expect the relative error $\|\wtilde{x}-x^*\|_2 / \|x^*\|_2$ to follow w.h.p.~a decay proportional to $m^{-1}$ (resp. $e^{-0.05 m}$). Figure~\ref{fig:syntheticresults} confirms those predictions. We repeat the same experiments with a second loss function, $f(Ax) = (2n)^{-1} \sum_{i=1}^n (a_i^\top x)^2_+ - 2(a_i^\top x) y_i$. The latter is a convex relaxation of the penalty $\frac{1}{2}\|(Ax)_+ - y\|_2^2$ for fitting a shallow neural network with a ReLU non-linearity. Again, Figure~\ref{fig:syntheticresults} confirms our predictions, and we observe that the adaptive method performs much better than the oblivious sketch.

\begin{figure}[h!]
	\centering
	\includegraphics[width=14cm]{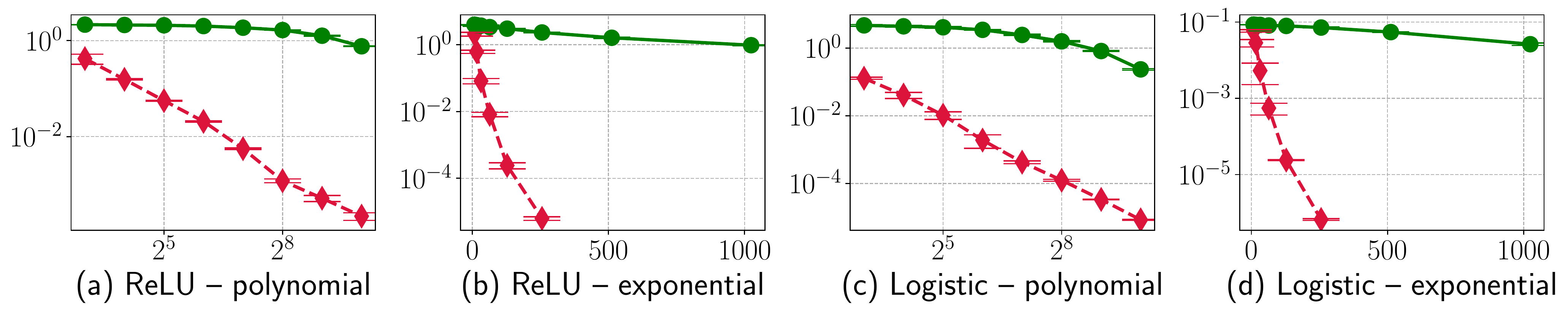}
	\vspace{-16pt}
	\caption{Relative error versus sketching dimension $m \in \{2^k \mid 3\less k\less10\}$ of adaptive Gaussian sketching (red) and oblivious Gaussian sketching (green), for the ReLU and logistic models, and the exponential and polynomial decays. We use $\lambda=10^{-4}$ for all simulations. Results are averaged over $10$ trials. Bar plots show (twice) the empirical standard deviations.}
	\label{fig:syntheticresults}
\end{figure}

\section{Algorithms for adaptive subspace sketching}
\label{sec:algorithms}

\subsection{Numerical conditioning and generic algorithm}
A standard quantity to characterize the capacity of a convex program to be solved efficiently is its condition number~\cite{boyd2004convex}, which, for the primal~\eqref{eq:primal} and (adaptive) sketched program~\eqref{eq:sketchedprimal}, are given by
\begin{equation*}
\kappa = \frac{\lambda + \sup_{x} \sigma_{1}\left( A^\top \nabla^2 f(Ax) A\right)}{\lambda + \inf_{x} \sigma_{d}\left( A^\top \nabla^2 f(Ax) A\right)}, \quad
\kappa_S = \frac{\sup_{\alpha}\sigma_{1}\left( \wtilde{S}^\top A (\lambda I + A^\top \nabla^2 f(A A^\top \wtilde{S} \alpha) A) A^\top \wtilde{S}\right)}{\inf_{\alpha}\sigma_{d}\left( \wtilde{S}^\top A ( \lambda I + A^\top \nabla^2 f(A A^\top \wtilde{S} \alpha) A) A^\top \wtilde{S}\right)}.
\end{equation*}
The latter can be significantly larger than $\kappa$, up to $\kappa_S \approx \kappa \frac{\sigma_{1}\left(\wtilde{S}^\top A A^\top \wtilde{S}\right)}{\sigma_{m}\left(\wtilde{S}^\top A A^\top \wtilde{S}\right)} \gg \kappa$. A simple change of variable overcomes this issue. With $A_{S,\dagger} = AS (S^\top S)^{-\frac{1}{2}}$, we solve instead the optimization problem
\begin{equation}
\label{eq:wellconditionedsketchedprimal}
\alpha_\dagger^* = \argmin_{\alpha_\dagger \in \mathbb{R}^m} f(A_{S,\dagger}\alpha_\dagger) + \frac{\lambda}{2}{\|\alpha_\dagger\|}_2^2.
\end{equation}
It holds that $\wtilde{x} = -\lambda^{-1} A^\top \nabla f(A_{S,\dagger}\alpha_\dagger^*)$. The additional complexity induced by this change of variables comes from computing the (square-root) pseudo-inverse of $S^\top S$, which requires $\mathcal{O}(m^3)$ flops via a singular value decomposition. When $m$ is small, this additional computation is negligible and numerically stable, and the re-scaled sketched program~\eqref{eq:wellconditionedsketchedprimal} is actually better conditioned that the original primal program~\eqref{eq:primal}, as stated in the next result that we prove in Appendix~\ref{app:proofnumericalconditioning}.
\begin{proposition}
	\label{prop:numericalconditioning}
	Under adaptive sketching, the condition number $\kappa_\dagger$ of the re-scaled sketched program~\eqref{eq:wellconditionedsketchedprimal} satisfies $\kappa_\dagger \less \kappa$ with probability $1$.
\end{proposition}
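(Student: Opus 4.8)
The plan is to write both condition numbers as ratios of extreme eigenvalues of the regularized Hessians and then to exploit the single structural fact that the re-scaled design matrix $A_{S,\dagger} = AS(S^\top S)^{-1/2}$ equals $AQ$ for a matrix $Q := S(S^\top S)^{-1/2}$ with orthonormal columns. First I would record where the randomness enters: since $\wtilde{S}$ is Gaussian i.i.d., the matrix $S = A^\top \wtilde{S}$ has full column rank $m$ with probability $1$ (its columns are a.s.\ linearly independent whenever $m \le \mathrm{rank}(A)$). On this event $S^\top S$ is invertible, $(S^\top S)^{-1/2}$ is well defined, and $Q^\top Q = (S^\top S)^{-1/2} S^\top S (S^\top S)^{-1/2} = I_m$. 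This is the only place the ``probability $1$'' qualifier is needed; everything else is deterministic linear algebra.

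Next I would compute the Hessian of the re-scaled objective~\eqref{eq:wellconditionedsketchedprimal}, namely $A_{S,\dagger}^\top \nabla^2 f(A_{S,\dagger}\alpha_\dagger) A_{S,\dagger} + \lambda I_m = Q^\top M(Q\alpha_\dagger) Q + \lambda I_m$, where I abbreviate $M(x) := A^\top \nabla^2 f(Ax) A \succeq 0$, so that
\begin{equation*}
\kappa_\dagger = \frac{\lambda + \sup_{\alpha_\dagger} \sigma_1\!\left(Q^\top M(Q\alpha_\dagger) Q\right)}{\lambda + \inf_{\alpha_\dagger} \sigma_m\!\left(Q^\top M(Q\alpha_\dagger) Q\right)}\,.
\end{equation*}
The key observation is that for any fixed p.s.d.\ matrix $M$ and any $Q$ with orthonormal columns, the map $v \mapsto (Qv)^\top M (Qv)$ ranges over a \emph{subset} of the unit sphere of $\mathbb{R}^d$, because $\|Qv\|_2 = \|v\|_2$. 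Hence by the Courant--Fischer min--max characterization $\sigma_1(Q^\top M Q) \le \sigma_1(M)$ and $\sigma_m(Q^\top M Q) \ge \sigma_d(M)$: compressing to $\mathrm{range}(Q)$ can only shrink the largest eigenvalue and lift the smallest one.

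To finish, I would apply these two inequalities pointwise, using that $A_{S,\dagger}\alpha_\dagger = A(Q\alpha_\dagger) = Ax$ for $x = Q\alpha_\dagger \in \mathbb{R}^d$, so each term indexed by $\alpha_\dagger$ is a compression of the primal Hessian at a particular point $x$. For the numerator, $\sigma_1(Q^\top M(Q\alpha_\dagger)Q) \le \sigma_1(M(Q\alpha_\dagger)) \le \sup_x \sigma_1(M(x))$, and taking the supremum over $\alpha_\dagger$ bounds the numerator of $\kappa_\dagger$ by that of $\kappa$. For the denominator, $\sigma_m(Q^\top M(Q\alpha_\dagger)Q) \ge \sigma_d(M(Q\alpha_\dagger)) \ge \inf_x \sigma_d(M(x))$, and taking the infimum over $\alpha_\dagger$ bounds the denominator of $\kappa_\dagger$ below by that of $\kappa$; here both effects---the subspace compression and the restriction to a sub-collection of points---push the bottom eigenvalue up. Since both denominators are strictly positive (each is at least $\lambda > 0$), dividing the smaller numerator by the larger denominator gives $\kappa_\dagger \le \kappa$.

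I expect the only delicate point to be the bookkeeping around the two extremization sets: the sup/inf defining $\kappa_\dagger$ run over $\alpha_\dagger \in \mathbb{R}^m$ while those defining $\kappa$ run over $x \in \mathbb{R}^d$, so one must argue carefully that every sketched Hessian is a compression $Q^\top M(x) Q$ of a genuine primal Hessian and that the sup/inf move monotonically under \emph{both} the subspace restriction and the shrinking of the index set. The underlying spectral inequalities are standard consequences of Courant--Fischer and require no computation.
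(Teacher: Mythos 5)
Your argument is essentially the paper's: both proofs reduce the claim to the two spectral inequalities $\sigma_1(Q^\top M Q) \le \sigma_1(M)$ and $\sigma_m(Q^\top M Q) \ge \sigma_d(M)$ for $Q = S(S^\top S)^{-\frac{1}{2}}$ and $M = A^\top \nabla^2 f(Ax) A \succeq 0$, establish them via the Rayleigh quotient using $\|Qv\|_2 = \|v\|_2$, and then use the inclusion $\{S\alpha \mid \alpha \in \mathbb{R}^m\} \subseteq \mathbb{R}^d$ to pass from the extremization over $\alpha_\dagger$ to the extremization over $x$. The bookkeeping you flag at the end is handled exactly as you describe and is not the delicate point.

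The genuine gap is your opening claim that $S = A^\top \wtilde{S}$ has full column rank with probability $1$. As you note parenthetically, this holds only when $m \le \mathrm{rank}(A)$, but the proposition carries no such hypothesis --- and the regime $\mathrm{rank}(A) < m$ is one the paper actively recommends (for a rank-$\rho$ matrix it suggests $m = 2k$ with $k \ge \rho + 1$). In that regime $S$ is rank-deficient almost surely, $(S^\top S)^{-\frac{1}{2}}$ must be read as a pseudo-inverse, and $Q^\top Q$ is a nontrivial orthogonal projector rather than $I_m$; then $Q$ has a nonzero kernel, $\sigma_m(Q^\top M Q) = 0$, and the isometry argument for the denominator collapses. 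The paper closes this case separately: rank deficiency of $S$ (given that $\wtilde{S}$ is a.s.\ full column rank) forces $\mathrm{Ker}(A) \neq \{0\}$, hence $\sigma_d(M) = 0$ as well, so both denominators equal $\lambda$ and the comparison survives; the numerator bound still goes through because $\|Q\|_2 \le 1$ holds regardless. You need this extra case analysis to obtain the unconditional probability-one statement.
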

\begin{algorithm}[H]
	\label{alg:MainAlgorithm}
	\DontPrintSemicolon
	\SetKwInOut{Input}{Input}
	\Input{Data matrix $A \in \mathbb{R}^{n \times d}$, random matrix $\wtilde{S} \in \mathbb{R}^{n \times m}$ and parameter $\lambda > 0$.}
	Compute the sketching matrix $S = A^\top \wtilde{S}$, and, the sketched matrix $A_S = AS$.\\
	Compute the re-scaling matrix $R = \left(S^\top S\right)^{-\frac{1}{2}}$, and the re-scaled sketched matrix $A_{S,\dagger} = A_S R$.\\
	Solve the convex optimization problem~\eqref{eq:wellconditionedsketchedprimal}, and return $\wtilde{x} = -\frac{1}{\lambda} A^\top \nabla f\left(A_{S,\dagger}\alpha^*_\dagger \right)$.
	\caption{Generic algorithm for adaptive sketching.}
\end{algorithm}
We observed a drastic practical performance improvement between solving the sketched program as formulated in~\eqref{eq:sketchedprimal} and its well-conditioned version~\eqref{eq:wellconditionedsketchedprimal}.

If the chosen sketch dimension $m$ is itself prohibitively large for computing the matrix $Q = (S^\top S)^{-\frac{1}{2}}$, one might consider a pre-conditioning matrix $Q$, which is faster to compute, and such that the matrix $SQ$ is well-conditioned. Typically, one might compute a matrix $Q$ based on an approximate singular value decomposition of the matrix $S^\top S$. Then, one solves the optimization problem $\alpha^*_Q = \argmin_{\alpha \in \mathbb{R}^m} f(AS Q\alpha) + \frac{\lambda}{2}{\|S Q \alpha\|}_2^2$. Provided that $Q$ is invertible, it holds that $\wtilde{x}$ satisfies $\wtilde{x} = -\lambda^{-1} A^\top \nabla f(ASQ \alpha^*_Q)$.

\subsection{Error contraction and almost exact recovery of the optimal solution}
The estimate $\wtilde{x}$ satisfies a guarantee of the form $\|\wtilde{x}-x^*\|_2 \less \varepsilon \|x^*\|_2$ w.h.p., and, with $\varepsilon < 1$ provided that $\lambda$ is large enough. Here, we extend Algorithm~\ref{alg:MainAlgorithm} to an iterative version which takes advantage of this error contraction, and which is relevant when a high-precision estimate $\wtilde{x}$ is needed.\\
\begin{algorithm}[H]
	\label{alg:Algorithm}
	\DontPrintSemicolon
	\SetKwInOut{Input}{Input}
	\Input{Data matrix $A \in \mathbb{R}^{n \times d}$, random matrix $\wtilde{S} \in \mathbb{R}^{n \times m}$, iterations number $T$, parameter $\lambda > 0$.}
	Compute the sketched matrix $A_{S,\dagger}$ as in Algorithm~\ref{alg:MainAlgorithm}. Set $\wtilde{x}^{(0)} = 0$.\\
	\For{$t= 1, 2,\dots,T$}{ 
		Compute $a^{(t)} = A \wtilde{x}^{(t-1)}$, and, $b^{(t)} = \left(S^\top S\right)^{-\frac{1}{2}} S^\top \wtilde{x}^{(t-1)}$.\\
		Solve the following convex optimization problem
		\begin{equation}
		\label{eq:intermediatesketchedprimal}
		\alpha_\dagger^{(t)} = \argmin_{\alpha_\dagger \in \mathbb{R}^m} f(A_{S,\dagger} \alpha_\dagger + a^{(t)}) + \frac{\lambda}{2} \|\alpha_\dagger + b^{(t)}\|_2^2\,.
		\end{equation}
		Update the solution by $\wtilde{x}^{(t)} = -\frac{1}{\lambda} A^\top \nabla f(A_{S,\dagger} \alpha_\dagger^{(t)} + a^{(t)})$.
	}
	Return the last iterate $\wtilde{x}^{(T)}$.
	\caption{Iterative adaptive sketching}
\end{algorithm}
A key advantage is that, at each iteration, the same sketching matrix $S$ is used. Thus, the sketched matrix $A_{S,\dagger}$ has to be computed only once, at the beginning of the procedure.  The output $\wtilde{x}^{(T)}$ satisfies the following recovery property, which empirical benefits are illustrated in Figure~\ref{fig:it-pm}.
\begin{theorem}
	\label{th:concentrationiterative}
	After $T$ iterations of Algorithm~\ref{alg:Algorithm}, provided that $\lambda \gre 2 \mu Z_f^2$, it holds that
	\begin{equation}
	\label{eq:bounditerative}
	\|\wtilde{x}^{(T)} - x^*\|_2 \less \left(\frac{\mu Z_f^2}{2\lambda}\right)^{\frac{T}{2}} \|x^*\|_2\,.
	\end{equation}
	Further, if $S = A^\top \wtilde{S}$ where $\wtilde{S}\in \mathbb{R}^{n \times m}$ with i.i.d. Gaussian entries and $m = 2k$ for some target rank $k \gre 2$, then, for some universal constant $c_0 \less 36$, after $T$ iterations of Algorithm~\ref{alg:Algorithm}, provided that $\lambda \gre 2 c_0^2 \mu R_{k}^2(A)$, the approximate solution $\wtilde{x}^{(T)}$ satisfies with probability at least $1-12e^{-k}$,
	\begin{equation}
	\label{eq:bounditerative}
	\|\wtilde{x}^{(T)} - x^*\|_2 \less \left(\frac{c_0^2 \mu R_k^2(A)}{2\lambda}\right)^{\frac{T}{2}} \|x^*\|_2\,.
	\end{equation}
\end{theorem}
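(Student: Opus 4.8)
The plan is to establish a one-step contraction and then conclude by induction. Writing $e^{(t)} := \wtilde{x}^{(t)} - x^*$, I will show that each iteration satisfies $\|e^{(t)}\|_2 \less \sqrt{\mu Z_f^2/(2\lambda)}\,\|e^{(t-1)}\|_2$; chaining this estimate $T$ times from the initialization $\wtilde{x}^{(0)}=0$, for which $\|e^{(0)}\|_2 = \|x^*\|_2$, yields the claimed bound. The entire difficulty is thus concentrated in the single-step inequality, which I intend to obtain by recognizing iteration $t$ of Algorithm~\ref{alg:Algorithm} as an instance of the one-shot scheme of Theorem~\ref{th:main} applied to a suitably shifted objective. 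Since every iterate lies in $\mathrm{range}(A^\top)$ (true for $\wtilde{x}^{(0)}=0$ and preserved by the update $\wtilde{x}^{(t)} = -\lambda^{-1}A^\top \nabla f(\cdot)$), I can fix a vector $c^{(t)}$ with $A^\top c^{(t)} = \lambda\,\wtilde{x}^{(t-1)}$ and define $g(w) := f(w + A\wtilde{x}^{(t-1)}) + {c^{(t)}}^{\!\top} w$. A linear term leaves the Hessian unchanged, so $g$ is convex and $\mu$-strongly smooth, hence a legitimate input to Theorem~\ref{th:main} with the same $A$, $S$, and $\lambda$. The primal solution of $\min_v g(Av) + \frac{\lambda}{2}\|v\|_2^2$ is $v^* = x^* - \wtilde{x}^{(t-1)}$, because its stationarity condition $A^\top \nabla g(Av^*) + \lambda v^* = 0$ collapses to the original optimality $A^\top \nabla f(Ax^*) + \lambda x^* = 0$ precisely by the choice $A^\top c^{(t)} = \lambda\,\wtilde{x}^{(t-1)}$.

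The crucial observation is that the functional $Z_f$ is invariant under this shift. A direct conjugate computation gives $g^*(z) = f^*(z - c^{(t)}) - (A\wtilde{x}^{(t-1)})^\top(z - c^{(t)})$, so $\mathrm{dom}\,g^* = \mathrm{dom}\,f^* + c^{(t)}$, while the dual solution becomes $z_g^* = \nabla g(Av^*) = z^* + c^{(t)}$. Consequently $\mathrm{dom}\,g^* - z_g^* = \mathrm{dom}\,f^* - z^*$, and since $P_S^\perp$ and $A$ are untouched, the quantity $Z_f(A,S)$ in~\eqref{eq:Zfdefinition} is identical for $g$ and for $f$; in particular the smoothness condition $\lambda \gre 2\mu Z_f^2$ transfers verbatim.

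Next I would verify that the sketched subproblem of Theorem~\ref{th:main} for $g$, in its re-scaled form $\min_{\alpha_\dagger} g(A_{S,\dagger}\alpha_\dagger) + \frac{\lambda}{2}\|\alpha_\dagger\|_2^2$, coincides with~\eqref{eq:intermediatesketchedprimal}. Expanding $g(A_{S,\dagger}\alpha_\dagger)$ produces the extra term ${c^{(t)}}^{\!\top} A_{S,\dagger}\alpha_\dagger$, and the identity $A_{S,\dagger}^\top c^{(t)} = (S^\top S)^{-\frac12} S^\top A^\top c^{(t)} = \lambda\, b^{(t)}$ shows that this term, together with $\frac{\lambda}{2}\|\alpha_\dagger\|_2^2$, equals $\frac{\lambda}{2}\|\alpha_\dagger + b^{(t)}\|_2^2$ up to an additive constant; the two programs therefore share the minimizer $\alpha_\dagger^{(t)}$. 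The point recovered by Theorem~\ref{th:main} for $g$ is then $\wtilde{v} = -\lambda^{-1} A^\top \nabla g(A_{S,\dagger}\alpha_\dagger^{(t)}) = \wtilde{x}^{(t)} - \wtilde{x}^{(t-1)}$, using the update rule and $A^\top c^{(t)} = \lambda\,\wtilde{x}^{(t-1)}$. Applying~\eqref{eq:mainbound} to $g$ gives $\|\wtilde{v} - v^*\|_2 \less \sqrt{\mu Z_f^2/(2\lambda)}\,\|v^*\|_2$, which is exactly the sought contraction since $\wtilde{v} - v^* = e^{(t)}$ and $\|v^*\|_2 = \|e^{(t-1)}\|_2$. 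Induction over $t$ then yields the deterministic bound.

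For the probabilistic statement, I would bound $Z_f \less \|P_S^\perp A^\top\|_2$ and invoke the same event as in Corollary~\ref{corollary:main}, on which $\|P_S^\perp A^\top\|_2 \less c_0 R_k(A)$ with probability at least $1 - 12e^{-k}$; on this event $\lambda \gre 2c_0^2\mu R_k^2(A) \gre 2\mu Z_f^2$ and the deterministic bound gives the result. The key point here — and the reason the exponent $T/2$ does not erode the probability — is that Algorithm~\ref{alg:Algorithm} reuses a single draw of $S$ at every iteration, so this event is fixed once and for all and no union bound over $t$ is needed. The main obstacle I anticipate is the bookkeeping of the shift: selecting $c^{(t)}$ and confirming the three matching identities, namely $v^* = x^* - \wtilde{x}^{(t-1)}$, the cancellation $Z_g = Z_f$, and the equality of the sketched subproblems together with $\wtilde{v} = \wtilde{x}^{(t)} - \wtilde{x}^{(t-1)}$, so that iteration $t$ is a genuine instance of Theorem~\ref{th:main} carrying an unchanged $Z_f$. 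Once $\mathrm{dom}\,g^* - z_g^* = \mathrm{dom}\,f^* - z^*$ is established, the remaining steps are routine algebra.
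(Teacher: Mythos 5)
Your proof is correct, and it reaches the same one-step contraction $\|\wtilde{x}^{(t)}-x^*\|_2 \less \sqrt{\mu Z_f^2/(2\lambda)}\,\|\wtilde{x}^{(t-1)}-x^*\|_2$ that the paper's proof establishes, followed by the same induction and the same single-event probabilistic argument (no union bound over $t$, since $S$ is drawn once). The difference is in how the contraction is obtained. The paper recenters the primal at $\wtilde{x}^{(t-1)}$ but keeps the loss $f$ fixed: it writes the translated problem $\min_\delta f(A\delta + A\wtilde{x}^{(t-1)}) + \frac{\lambda}{2}\|\delta+\wtilde{x}^{(t-1)}\|_2^2$, re-derives the Fenchel duals and KKT conditions, and re-runs the chain of inequalities from the proof of Theorem~\ref{th:main} with $\Delta = y^*-z^*$, the translation surfacing as the modified right-hand side $-\lambda\langle AP_S^\perp\delta^*,\Delta\rangle$ with $\delta^*=x^*-\wtilde{x}^{(t-1)}$. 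You instead absorb the translation into a tilted loss $g(w)=f(w+A\wtilde{x}^{(t-1)})+{c^{(t)}}^{\top}w$ with $A^\top c^{(t)}=\lambda\wtilde{x}^{(t-1)}$, so that iteration $t$ becomes a literal instance of Theorem~\ref{th:main}; the decisive checks — that the iterates stay in $\mathrm{range}(A^\top)$ so $c^{(t)}$ exists, that $\mathrm{dom}\,g^*-z_g^*=\mathrm{dom}\,f^*-z^*$ so $Z_f$ is unchanged, and that the tilted sketched program matches~\eqref{eq:intermediatesketchedprimal} up to an additive constant — are all carried out correctly. Your route buys modularity (Theorem~\ref{th:main} is used as a black box, with no inequalities re-derived) at the cost of the conjugate bookkeeping and the range condition on the iterates; the paper's route avoids both of those but duplicates the duality argument. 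Either way the constants and the probability $1-12e^{-k}$ come out identically.
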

\begin{figure}[h!]
	\centering
	\includegraphics[width=13.5cm]{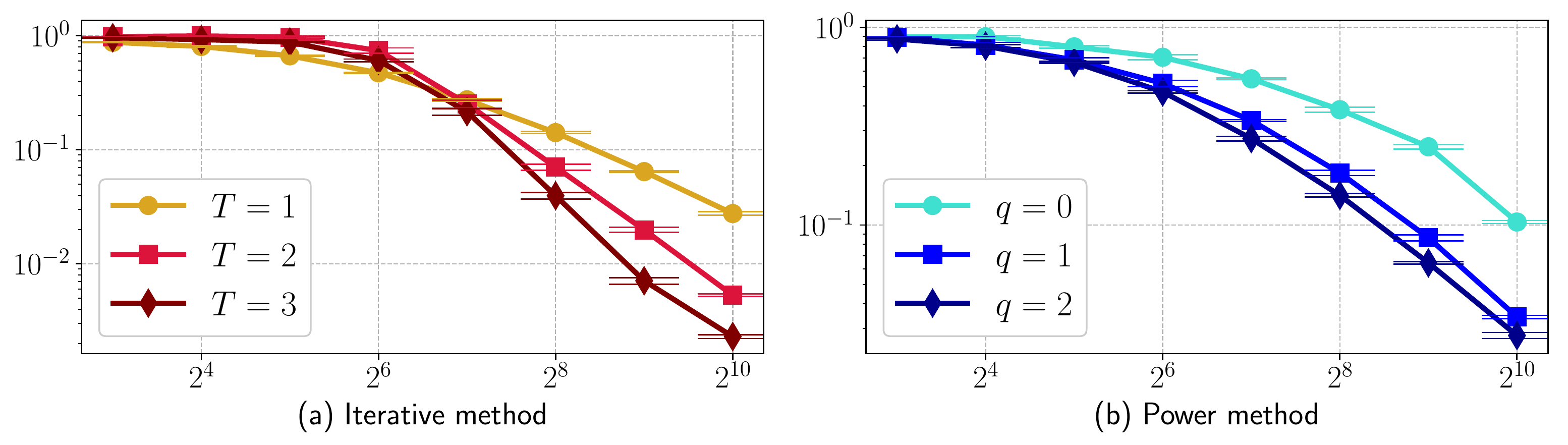}
	\caption{Relative error versus sketching dimension $m \in \{2^k \mid 3\less k\less10\}$ of adaptive Gaussian sketching for (a) the iterative method (Algorithm~\ref{alg:Algorithm}) and (b) the power method (see Remark~\ref{remark:powermethod}). We use the MNIST dataset with images mapped through $10000$-dimensional random Fourier features~\cite{rahimi2008random} for even-vs-odd classification using binary logistic loss, and, $\lambda=10^{-5}$. Results are averaged over 20 trials. Bar plots show (twice) the empirical standard deviations.}
	\label{fig:it-pm}
\end{figure}
\begin{remark}
	\label{remark:powermethod}
	An immediate extension of Algorithms~\ref{alg:MainAlgorithm} and~\ref{alg:Algorithm} consists in using the power method~\cite{halko2011}. Given $q \in \mathbb{N}$, one uses the sketching matrix $S={(A^\top A)}^q A^\top \wtilde{S}$. The larger $q$, the smaller the approximation error $\|AA^\top - AS(S^\top S)^\dagger S^\top A^\top\|_2$ (see Corollary 10.10 in~\cite{halko2011}). Of pratical interest are data matrices $A$ with a spectral profile starting with a fast decay, and then becoming flat. This happens typically for $A$ of the form $A = \overline{A} + W$, where $\overline{A}$ has a fast decay and $W$ is a noise matrix with, for instance, independent subgaussian rows~\cite{vershynin2018high}. Our results easily extend to this setting and we illustrate its empirical benefits in Figure~\ref{fig:it-pm}.
\end{remark}
\section{Application to empirical risk minimization and kernel methods}
\label{sec:applications}
By the representer theorem, the primal program~\eqref{eq:primal} can be re-formulated as 
\begin{equation}
\label{eq:primalkernel}
w^* \in \argmin_{w \in \mathbb{R}^n} f(K w) + \frac{\lambda}{2} w^\top K w\,,
\end{equation}
where $K=AA^\top$. Clearly, it holds that $x^* = A^\top w^*$. Given a matrix $\wtilde{S}$ with i.i.d. Gaussian entries, we consider the sketched version of the kernelized primal program~\eqref{eq:primalkernel}, 
\begin{equation}
\label{eq:sketchedprimalkernel}
\alpha^* \in \argmin_{\alpha \in \mathbb{R}^m} f(K \wtilde{S} \alpha) + \frac{\lambda}{2} \alpha^\top \wtilde{S}^\top K \wtilde{S} \alpha\,.
\end{equation}
The sketched program~\eqref{eq:sketchedprimalkernel} is exactly our adaptive Gaussian sketched program~\eqref{eq:sketchedprimal}. Thus, setting $\wtilde{w} = -\lambda^{-1} \nabla f(K \wtilde{S}\alpha^*)$, it holds that $\wtilde{x} = A^\top \wtilde{w}$. Since the relative error ${\|\wtilde{x}-x^*\|}_2/{\|x^*\|}_2$ is controlled by the decay of the eigenvalues of $K$, so does the relative error ${\|A^\top (\wtilde{w}-w^*)\|}_2 / {\|A^\top w^*\|}_2$. More generally, the latter statements are still true if $K$ is any positive semi-definite matrix, and, if we replace $A^\top$ by any square-root matrix of $K$. Here, we denote $Z_f \equiv Z_f\left(K^\frac{1}{2}, K^\frac{1}{2}\wtilde{S}\right)$ (see Eq.~\eqref{eq:Zfdefinition}).
\begin{theorem}
	\label{th:mainkernel}
	Let $K \in \mathbb{R}^{n \times n}$ be any positive semi-definite matrix. Let $w^*$ be any minimizer of the kernel program~\eqref{eq:primalkernel} and $\alpha^*$ be any minimizer of its sketched version~\eqref{eq:sketchedprimalkernel}. Define the approximate solution $\wtilde{w} = -\frac{1}{\lambda} \nabla f(K \wtilde{S} \alpha^*)$. If $\lambda \gre 2 \mu Z_f^2$, then it holds that
	\begin{equation}
	\label{eq:mainboundkernel}
	{\|K^\frac{1}{2}(\wtilde{w}-w^*)\|}_2 \less \sqrt{\frac{\mu}{2 \lambda}} Z_f {\|K^\frac{1}{2}w^*\|}_2\,.
	\end{equation}
\end{theorem}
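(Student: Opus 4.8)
The plan is to reduce the statement to the already-established deterministic bound of Theorem~\ref{th:main} by instantiating it with the symmetric square root $A = K^{\frac{1}{2}}$. The key observation is that every object appearing in Theorem~\ref{th:main}---the two programs, their minimizers, the candidate approximation, and the functional $Z_f$ of~\eqref{eq:Zfdefinition}---depends on the data matrix only through $A A^\top$. Since $K^{\frac{1}{2}}(K^{\frac{1}{2}})^\top = K$, choosing $A = K^{\frac{1}{2}}$ (any square root works, as noted in the text) realizes the kernel setting as a special case of the primal problem~\eqref{eq:primal} with ambient dimension $d = n$, so no new analysis is required.

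First I would verify that the change of variables $x = K^{\frac{1}{2}} w$ turns the primal~\eqref{eq:primal} with $A = K^{\frac{1}{2}}$ into the kernel program~\eqref{eq:primalkernel}: indeed $A x = K^{\frac{1}{2}} K^{\frac{1}{2}} w = K w$ and $\|x\|_2^2 = w^\top K w$, so the objectives coincide, and since $x^* = -\lambda^{-1} K^{\frac{1}{2}} z^* \in \text{range}(K^{\frac{1}{2}})$ by Proposition~\ref{prop:fenchelduality}, the substitution captures the unique $x^*$ and gives $x^* = K^{\frac{1}{2}} w^*$ for any kernel minimizer $w^*$. Next I would check that the adaptive sketch $S = A^\top \wtilde{S} = K^{\frac{1}{2}}\wtilde{S}$ yields $AS = K\wtilde{S}$ and $S^\top S = \wtilde{S}^\top K \wtilde{S}$, so the sketched primal~\eqref{eq:sketchedprimal} becomes exactly the sketched kernel program~\eqref{eq:sketchedprimalkernel} and the two share the minimizer $\alpha^*$. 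Finally, the candidate point of Theorem~\ref{th:main} reads $\wtilde{x} = -\lambda^{-1} A^\top \nabla f(AS\alpha^*) = -\lambda^{-1} K^{\frac{1}{2}}\nabla f(K\wtilde{S}\alpha^*) = K^{\frac{1}{2}}\wtilde{w}$.

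With these identifications the conclusion is immediate. The functional $Z_f(A,S)$ instantiates to $Z_f(K^{\frac{1}{2}}, K^{\frac{1}{2}}\wtilde{S})$, which is precisely the $Z_f$ of the statement: its spherical cap lives in $\mathbb{R}^n$ and $z^* = \nabla f(Ax^*) = \nabla f(Kw^*)$ is the kernel dual solution. Under $\lambda \gre 2\mu Z_f^2$, Theorem~\ref{th:main} gives $\|\wtilde{x} - x^*\|_2 \less \sqrt{\mu/(2\lambda)}\, Z_f \|x^*\|_2$, and substituting $\wtilde{x} - x^* = K^{\frac{1}{2}}(\wtilde{w} - w^*)$ and $x^* = K^{\frac{1}{2}} w^*$ yields~\eqref{eq:mainboundkernel}.

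The only real care needed---rather than a genuine obstacle---is the bookkeeping around non-uniqueness: when $K$ is singular, $w^*$ and $\alpha^*$ need not be unique, so I would stress that the reduction controls only $K^{\frac{1}{2}} w^*$ and $K^{\frac{1}{2}}\wtilde{w}$, both well defined, and that both sides of~\eqref{eq:mainboundkernel} are invariant under the choice of minimizer and of square root (all depend on $A$ solely through $K$). Making this invariance explicit---equivalently, confirming that $\|P_S^\perp A^\top\|_2^2 = \|K - K\wtilde{S}(\wtilde{S}^\top K\wtilde{S})^\dagger \wtilde{S}^\top K\|_2$ and that the kernel objectives factor through $K^{\frac{1}{2}}w$, so their minima are attained exactly when $K^{\frac{1}{2}}w = x^*$---is the one verification I would spell out to render the substitution fully rigorous.
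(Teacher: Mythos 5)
Your proposal is correct and follows essentially the same route as the paper: the appendix proof likewise applies Theorem~\ref{th:main} with $A = K^{\frac{1}{2}}$ and $S = K^{\frac{1}{2}}\wtilde{S}$ and concludes via the identities $\wtilde{x} = K^{\frac{1}{2}}\wtilde{w}$ and $x^* = K^{\frac{1}{2}}w^*$. Your additional bookkeeping (that $x^* \in \mathrm{range}(K^{\frac{1}{2}})$, so the substitution loses nothing when $K$ is singular, and that both sides of the bound are invariant under the choice of minimizer) fills in details the paper leaves implicit.
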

For a positive definite kernel $k : \mathbb{R}^d \times \mathbb{R}^d \to \mathbb{R}$ and a data matrix $A = [a_1,\hdots,a_n]^\top \in \mathbb{R}^{n \times d}$, let $K$ be the empirical kernel matrix, with $K_{ij}=k(a_i,a_j)$. Let $\varphi(\cdot) \in \mathbb{R}^D$ be a random feature map~\cite{rahimi2008random, coates2012learning}, such as random Fourier features or a random convolutional neural net. We are interested in the computational complexities of forming the sketched versions of the primal~\eqref{eq:primal}, the kernel primal~\eqref{eq:primalkernel} and the primal~\eqref{eq:primal} with $\varphi(A)$ instead of $A$. We compare the complexities of adaptive and oblivious sketching and uniform column sub-sampling. Table~\ref{tab:computationalcomplexitycomparison} shows that all three methods have similar complexities for computing $AS$ and $\varphi(A)S$. Adaptive sketching exhibits an additional factor $2$ that comes from computing the correlated sketching matrices $S=A^\top \wtilde{S}$ and $S=\varphi(A)^\top \wtilde{S}$. In practice, the latter is negligible compared to the cost of forming $\varphi(A)$ which, for instance, corresponds to a forward pass over the whole dataset in the case of a convolutional neural network. On the other hand, uniform column sub-sampling is significantly faster in order to form the sketched kernel matrix $K \wtilde{S}$, which relates to the well-known computational advantages of kernel Nystrom methods~\cite{yang2012nystrom}.

\begin{table}[h!]
	\centering
	\caption{Complexity of forming the sketched programs, given $A \in \mathbb{R}^{n \times d}$. We denote $d_k$ the number of flops to evaluate the kernel product $k(a,a^\prime)$, and, $d_\varphi$ the number of flops for a forward-pass $\varphi(a)$.  Note that these complexities could be reduced through parallelization.}
	\begin{small}
		\label{tab:computationalcomplexitycomparison}
		\centering
		\begin{tabular}{c|c|c|c}
			& $A S$ & $\varphi(A) S$ & $K \wtilde{S}$\\
			\midrule
			Adaptive sketching & $\mathcal{O}\left(2 m d n\right)$ & $\mathcal{O}\left(d_\varphi n\right) + \mathcal{O}\left(2 m D n\right)$ & $\mathcal{O}\left(d_k n^2\right) + \mathcal{O}\left(m n^2\right)$\\
			Oblivious sketching & $\mathcal{O}\left(m d n\right)$ & $\mathcal{O}\left(d_\varphi n\right) + \mathcal{O}\left(m D n\right)$ & - \\
			Uniform column sub-sampling & $\mathcal{O}\left(m d n\right)$ & $\mathcal{O}\left(d_\varphi n\right) + \mathcal{O}\left(m D n\right)$ & $\mathcal{O}\left(d_k n m\right)$\\
		\end{tabular}
	\end{small}
\end{table}
\section{Numerical evaluation of adaptive Gaussian sketching}
\label{sec:numericalexperiments}
We evaluate Algorithm~\ref{alg:MainAlgorithm} on MNIST and CIFAR10. First, we aim to show that the sketching dimension can be considerably smaller than the original dimension while retaining (almost) the same test classification accuracy. Second, we aim to get significant speed-ups in achieving a high-accuracy classifier. To solve the primal program~\eqref{eq:primal}, we use two standard algorithms, stochastic gradient descent (SGD) with (best) fixed step size and stochastic variance reduction gradient (SVRG)~\cite{johnson2013accelerating} with (best) fixed step size and frequency update of the gradient correction. To solve the adaptive sketched program~\eqref{eq:sketchedprimal}, we use SGD, SVRG and the sub-sampled Newton method~\cite{bollapragada2018exact, erdogdu2015convergence} -- which we refer to as Sketch-SGD, Sketch-SVRG and Sketch-Newton. The latter is well-suited to the sketched program, as the low-dimensional Hessian matrix can be quickly inverted at each iteration. For both datasets, we use $50000$ training and $10000$ testing images. We transform each image using a random Fourier feature map $\varphi(\cdot) \in \mathbb{R}^D$, i.e., $\langle \varphi(a), \varphi(a^\prime) \rangle \approx \exp\left(-\gamma \|a-a^\prime\|^2_2 \right)$~\cite{rahimi2008random, scikit-learn}. For MNIST and CIFAR10, we choose respectively $D=10000$ and $\gamma=0.02$, and, $D=60000$ and $\gamma=0.002$, so that the primal is respectively $10000$-dimensional and $60000$-dimensional. Then, we train a classifier via a sequence of binary logistic regressions -- which allow for efficient computation of the Hessian and implementation of the Sketch-Newton algorithm --, using a one-vs-all procedure. 

First, we evaluate the test classification error of $\wtilde{x}$. We solve to optimality the primal and sketched programs for values of $\lambda \in \{10^{-4}, 5 \cdot 10^{-5}, 10^{-5}, 5 \cdot 10^{-6}\}$ and sketching dimensions $m \in \{64, 128, 256, 512, 1024\}$. In Table~\ref{tab:rawrandomfourierfeatures} are reported the results, which are averaged over $20$ trials for MNIST and $10$ trials for CIFAR10, and, empirical variances are reported in Appendix~\ref{app:experimentalresults}. Overall, the adaptive sketched program yields a high-accuracy classifier for most couples $(\lambda, m)$. Further, we match the best primal classifier with values of $m$ as small as $256$ for MNIST and $512$ for CIFAR10, which respectively corresponds to a dimension reduction by a factor $\approx 40$ and $\approx 120$. These results additionally suggest that adaptive Gaussian sketching introduces an implicit regularization effect, which might be related to the benefits of spectral cutoff estimators. For instance, on CIFAR10, using $\lambda=10^{-5}$ and $m=512$, we obtain an improvement in test accuracy by more than $2\%$ compared to $x^*$. Further, over some sketching dimension threshold under which the performance is bad, as the value of $m$ increases, the test classification error of $\wtilde{x}$ increases to that of $x^*$, until matching it.

\begin{table}[!h]
	\centering
	\begin{scriptsize}
		\caption{Test classification error of adaptive Gaussian sketching on MNIST and CIFAR10 datasets.}
		\label{tab:rawrandomfourierfeatures}
		\begin{tabular}{|c|c c c c c c | c c c c c c|}
			\cmidrule(r){1-13}
			$\lambda$ & $x^*_{\text{MNIST}}$ & $\wtilde{x}_{64}$ & $\wtilde{x}_{128}$ & $\wtilde{x}_{256}$ & $\wtilde{x}_{512}$ & $\wtilde{x}_{1024}$ & $x^*_{\text{CIFAR}}$ & $\wtilde{x}_{64}$ & $\wtilde{x}_{128}$ & $\wtilde{x}_{256}$ & $\wtilde{x}_{512}$ & $\wtilde{x}_{1024}$  \\
			\midrule
			$10^{-4}$   & 5.4 & 4.8 & 4.8 & 5.2 & 5.3 & 5.4 & - & - & - & - & - & - \\
			$5 \cdot 10^{-5}$ & 4.6 & 4.1 & 3.8 & 4.0 & 4.3 & 4.5 & 51.6 & 52.1 & 50.5 & 50.6 & 50.8 & 51.0 \\
			$10^{-5}$   & 2.8 & 8.1 & 3.4 & 2.4 & 2.5 & 2.8 & 48.2 & 60.1 & 54.5 & 47.7 & 45.9 & 46.2 \\
			$5 \cdot 10^{-6}$ & 2.5 & 11.8  & 4.9 & 2.8 & 2.6 & 2.4  & 47.6 & 63.6 & 59.8 & 51.9 & 47.7 & 45.8 \\
			\bottomrule
		\end{tabular}
	\end{scriptsize}
\end{table}

Further, we evaluate the test classification error of two sketching baselines, that is, oblivious Gaussian sketching for which the matrix $S$ has i.i.d.~Gaussian entries, and, adaptive column sub-sampling (Nystrom method) for which $S=A^\top \wtilde{S}$ with $\wtilde{S}$ a column sub-sampling matrix. As reported in Table~\ref{tab:comparisonsketchingbaselines}, adaptive Gaussian sketching performs better for a wide range of values of sketching size $m$ and regularization parameter $\lambda$.

\begin{table}[!h]
	\centering
	\begin{small}
		\caption{Test classification error on MNIST and CIFAR10. "AG": Adaptive Gaussian sketch, "Ob": Oblivious Gaussian sketch, "N": Nystrom method.}
		\label{tab:comparisonsketchingbaselines}
		\begin{tabular}{|c|c c c c c c c|}
			\cmidrule(r){1-7}
			$\lambda$ & $x^*_{\text{MNIST}}$ & $\wtilde{x}_{256}^{AG}$ & $\wtilde{x}_{1024}^{AG}$ & $\wtilde{x}_{256}^{Ob}$ & $\wtilde{x}_{1024}^{Ob}$ & $\wtilde{x}_{256}^{N}$ & $\wtilde{x}_{1024}^{N}$ \\
			\midrule
			$5 \cdot 10^{-5}$ & 4.6 \% & 4.0 \% & 4.5 \% & 25.2 \% & 8.5\% & 5.0 \%& 4.6 \\
			$5 \cdot 10^{-6}$ & 2.5\% & 2.8\%  & 2.4\% & 30.1\% & 9.4\% & 3.0\% & 2.7\% \\
			\bottomrule
		\end{tabular}
		\begin{tabular}{|c|c c c c c c c|}
			\cmidrule(r){1-7}
			$\lambda$ & $x^*_{\text{CIFAR}}$ & $\wtilde{x}_{256}^{AG}$ & $\wtilde{x}_{1024}^{AG}$ & $\wtilde{x}_{256}^{Ob}$ & $\wtilde{x}_{1024}^{Ob}$ & $\wtilde{x}_{256}^{N}$ & $\wtilde{x}_{1024}^{N}$  \\
			\midrule
			$5 \cdot 10^{-5}$ & 51.6 \%& 50.6\% & 51.0\% & 88.2\% & 70.5\% & 55.8\% & 53.1\% \\
			$5 \cdot 10^{-6}$ & 47.6\% & 51.9\% & 45.8\% & 88.9\% & 80.1\% & 57.2\% & 55.8\% \\
			\bottomrule
		\end{tabular}
	\end{small}
\end{table}

Then, we compare the test classification error versus wall-clock time of the optimization algorithms mentioned above. Figure~\ref{fig:speed-ups} shows results for some values of $m$ and $\lambda$. We observe some speed-ups on the $10000$-dimensional MNIST problem, in particular for Sketch-SGD and for Sketch-SVRG, for which computing the gradient correction is relatively fast. Such speed-ups are even more significant on the $60000$-dimensional CIFAR10 problem, especially for Sketch-Newton. A few iterations of Sketch-Newton suffice to almost reach the minimum $\wtilde{x}$, with a per-iteration time which is relatively small thanks to dimensionality reduction. Hence, it is more than $10$ times faster to reach the best test accuracy using the sketched program. In addition to random Fourier features mapping, we carry out another set of experiments with the CIFAR10 dataset, in which we pre-process the images. That is, similarly to~\cite{tu2016large, recht2018cifar}, we map each image through a random convolutional layer. Then, we kernelize these processed images using a Gaussian kernel with $\gamma=2 \cdot 10^{-5}$. Using our implementation, the best test accuracy of the kernel primal program~\eqref{eq:primalkernel} we obtained is $73.1\%$. Sketch-SGD, Sketch-SVRG and Sketch-Newton -- applied to the sketched kernel program~\eqref{eq:sketchedprimalkernel} -- match this test accuracy, with significant speed-ups, as reported in Figure~\ref{fig:speed-ups}.

\begin{figure}[h!]
	\centering
	\includegraphics[width=14cm]{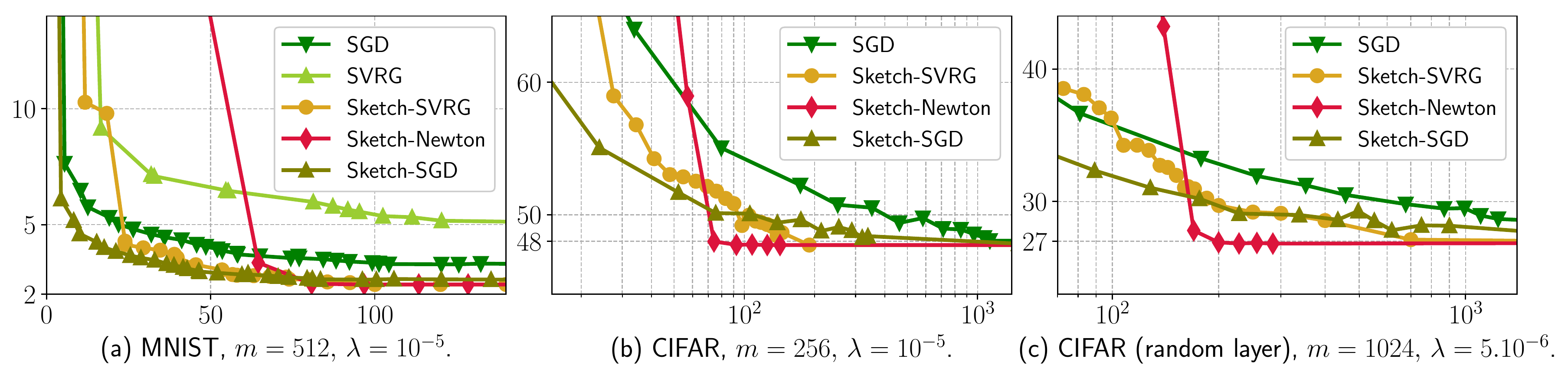}
	\caption{Test classification error (percentage) versus wall-clock time (seconds).}
	\label{fig:speed-ups}
\end{figure}

\section*{Acknowledgements}
This work was partially supported by the National Science Foundation under grant IIS-1838179 and Office of Naval Research, ONR YIP Program, under contract N00014-17-1-2433.

%
%
%
%
%
%
%
%

\newpage
\appendix 
\section{Additional experimental results and implementation details}
\label{app:experimentalresults}
\subsection{Synthetic examples (Figure~\ref{fig:syntheticresults})}
With $n=1000$ and $d=2000$, we sample two matrices with orthonormal columns $U \in \mathbb{R}^{n \times n}$ and $V \in \mathbb{R}^{d \times n}$, uniformly at random from their respective spaces. We construct two diagonal matrices $\Sigma^\text{poly}, \Sigma^\text{exp} \in \mathbb{R}^{n \times n}$, such that their respective diagonal elements are $\Sigma^\text{poly}_{jj} = \sqrt{n} j^{-1}$ and $\Sigma^\text{exp}_{jj} = \sqrt{n} e^{-0.05 j}$. We set $A^{\text{exp}} = U \Sigma^\text{exp} V^\top$ and $A^{\text{poly}} = U \Sigma^\text{poly} V^\top$, and we sample a planted vector $x_{\text{gd}} \in \mathbb{R}^d$ with iid entries $\mathcal{N}(0,1)$. 

In the case of binary logistic regression, for each $A \in \{A^{\text{exp}}, A^{\text{poly}}\}$, we set $y_i = 0.5 \left(\text{sign}\left(\langle a_i, x_{\text{gd}}\rangle\right)+1\right)$, for $i=1,\hdots,n$. 

For the ReLU model, we set $y_i = (\langle a_i, x_{\text{gd}} \rangle)_+$, where $(z)_+ = \max(0, z)$. Hence, each observation $y_i$ is the result of a linear operation $z_i = \langle x, a_i \rangle$ and a non-linear operation $y_i = (z_i)_+$. Additionally, it can be shown that the global minimum of the optimization problem 
%
\begin{equation*}
\min_x \frac{1}{2} \sum_{i=1}^n (a_i^\top x)_+^2 - 2 (a_i^\top x) y_i,
\end{equation*}
is equal to $x_{\text{gd}}$, which motivates using such a convex relaxation.
\subsection{Numerical illustration of the iterative and power methods (Figure~\ref{fig:it-pm})}
We use the MNIST dataset with $50000$ training images and $10000$ testing images. We rescale the pixel values between $[0,1]$. Each image is mapped through random cosines $\varphi(\cdot) \in \mathbb{R}^D$ which approximate the Gaussian kernel, i.e., $\langle \varphi(a), \varphi(a^\prime) \rangle \approx \exp(-\gamma \|a-a^2\|_2^2)$. We choose $D=10000$ and $\gamma = 0.02$. 

We perform binary logistic regression for even-vs-odd classification of the digits.

For the iterative method, we use the sketching matrix $S = (A^\top A)^2 A^\top \wtilde{S}$, where $\wtilde{S}$ is Gaussian iid. That is, we run the iterative method on top of the power method, with $q=2$.

\subsection{Adaptive Gaussian sketching on MNIST and CIFAR10 datasets (Table~\ref{tab:rawrandomfourierfeatures} and Figure~\ref{fig:speed-ups})}
\begin{table}[!h]
	\centering
	\begin{small}
		\caption{Empirical standard deviation of test classification error on MNIST and CIFAR10 datasets, mapped through Gaussian random Fourier features, respectively with $D=10000$ and $\gamma=0.02$, and, $D=60000$ and $\gamma=0.002$. The notation $\wtilde{x}_m$ refers to the solution of~\eqref{eq:sketchedprimal}, with sketching size $m$.}
		\label{tab:empiricalvariancerawrandomfourierfeatures}
		\begin{tabular}{|c|c c c c c c | c c c c c c|}
			\cmidrule(r){1-13}
			$\lambda$ & $x^*_{\text{MNIST}}$ & $\wtilde{x}_{64}$ & $\wtilde{x}_{128}$ & $\wtilde{x}_{256}$ & $\wtilde{x}_{512}$ & $\wtilde{x}_{1024}$ & $x^*_{\text{CIFAR}}$ & $\wtilde{x}_{64}$ & $\wtilde{x}_{128}$ & $\wtilde{x}_{256}$ & $\wtilde{x}_{512}$ & $\wtilde{x}_{1024}$  \\
			\midrule
			$10^{-4}$   & - & 0.2 & 0.2 & 0.1 & 0.1 & 0.1 & - & - & - & - & - & - \\
			$5 \cdot 10^{-5}$ & - & 0.2 & 0.2 & 0.2 & 0.1 & 0.1 & - & 0.5 & 0.3 & 0.3 & 0.2 & 0.2 \\
			$10^{-5}$   & - & 2.0 & 0.8 & 0.2 & 0.1 & 0.1 & - & 4.8 & 3.2 & 0.6 & 0.2 & 0.2 \\
			$5 \cdot 10^{-6}$ & - & 3.2 & 2.1 & 0.3 & 0.2 & 0.1 & - & 4.1 & 3.5 & 2.1 & 0.6 & 0.6 \\
			\bottomrule
		\end{tabular}
	\end{small}
\end{table}

Experiments were run in Python on a workstation with $20$ cores and $256$ GB of RAM. The MNIST and CIFAR10 datasets were downloaded through the PyTorch.torchvision module and converted to NumPy arrays. We use the Sklearn.kernel\_approximation.RBFSampler module to generate random cosines. We use our own implementation of each algorithm for a fair comparison.

For SGD, we use a batch size equal to $128$. For SVRG, we use a batch size equal to $128$ and update the gradient correction every $400$ iterations. For Sketch-SGD, we use a batch size equal to $1024$. For Sketch-SVRG, we use a batch size equal to $64$ and update the gradient correction every $200$ iterations. Each iteration of the sub-sampled Newton method (Sketch-Newton) computes a full-batch gradient, and, the Hessian with respect to a batch of size $1500$. 

For SGD and SVRG, we considered step sizes $\eta$ between $10^{-2}$ and $10^{2}$. We obtained best performance for $\eta = 10^1$. For the sub-sampled Newton method, we use a step size $\eta=1$, except for the first $5$ iterations, for which we use $\eta=0.2$.

In Figure~\ref{fig:speed-ups}, we did not report results for SVRG for solving the primal~\eqref{eq:primal} on CIFAR, as the computation time for reaching a satisfying performance was significantly larger than for the other algorithms.

In Table~\ref{tab:rawrandomfourierfeatures}, we did not investigate results for CIFAR with $\lambda=10^{-4}$, as the primal classifier had a test error significantly larger than smaller values of $\lambda$.

\section{Proof of main results}
Here, we establish our main technical results, that is, the deterministic regret bounds~\eqref{eq:mainbound} and~\eqref{eq:mainbound2} stated in Theorem~\ref{th:main} and its high-probability version stated in Corollary~\ref{corollary:main}, along with its extension to the iterative Algorithm~\ref{alg:Algorithm} as given in Theorem~\ref{th:concentrationiterative}, and its variant for kernel methods, given in Theorem~\ref{th:mainkernel}. Our analysis is based on convex analysis and Fenchel duality arguments.

\subsection{Proof of Theorem~\ref{th:main}}
\label{app:prooftheorem1}

We introduce the Fenchel dual program of~\eqref{eq:primal}, 
\begin{equation}
\label{eq:dual}
    \min_z f^*(z) + \frac{1}{2\lambda}{\|A^\top z\|}_2^2.
\end{equation}
For a sketching matrix $S \in \mathbb{R}^{d \times m}$, the Fenchel dual program of~\eqref{eq:sketchedprimal} is 
\begin{equation}
\label{eq:sketcheddual}
    \min_y f^*(y) + \frac{1}{2\lambda} {\|P_S A^\top y\|}_2^2.
\end{equation}
Let $\alpha^*$ be any minimizer of the sketched program~\eqref{eq:sketchedprimal}. Then, according to Proposition~\ref{prop:fencheldualitysketchedprogram}, the unique solution of the dual sketched program~\eqref{eq:sketcheddual} is
\begin{equation*}
y^* = \nabla f(AS\alpha^*)    
\end{equation*}
and the subgradient set $\partial f^*(y^*)$ is non-empty. We fix $g_{y^*} \in \partial f^*(y^*)$. 

According to Proposition~\ref{prop:fenchelduality}, the dual program~\eqref{eq:dual} admits a unique solution $z^*$, which satisfies
\begin{equation*}
    z^* = \nabla f(Ax^*),
\end{equation*}
and which subgradient set $\partial f^*(z^*)$ is non-empty. We fix $g_{z^*} \in \partial f^*(z^*)$.

We denote the error between the two dual solutions by $\Delta = y^* - z^*$. By optimality of $y^*$ with respect to the sketched dual~\eqref{eq:sketcheddual} and by feasibility of $z^*$, first-order optimality conditions imply that
\begin{equation*}
    \langle \frac{1}{\lambda} AP_SA^\top y^* + g_{y^*}, \Delta \rangle \less 0.
\end{equation*}
Similarly, by optimality of $z^*$ with respect to the dual~\eqref{eq:dual} and by feasibility of $y^*$, we get by first-order optimality conditions that
\begin{equation*}
    \langle \frac{1}{\lambda} AA^\top z^* + g_{z^*}, \Delta \rangle \gre 0.
\end{equation*}
It follows that
\begin{equation}
\label{eq:intermediatesetofinequalities}
\begin{split}
    \langle \frac{1}{\lambda} AP_S A^\top \Delta, \Delta \rangle &= \langle \frac{1}{\lambda} AP_SA^\top y^*, \Delta \rangle - \langle \frac{1}{\lambda} AP_SA^\top z^*, \Delta \rangle\\
    &= \underbrace{\langle \frac{1}{\lambda} AP_SA^\top y^* + g_{y^*}, \Delta \rangle}_{\less 0} + \langle g_{z^*} - g_{y^*}, \Delta \rangle - \langle \frac{1}{\lambda} AP_SA^\top z^* + g_{z^*}, \Delta \rangle \\
    & \less \langle g_{z^*} - g_{y^*}, \Delta \rangle - \langle \frac{1}{\lambda} AP_SA^\top z^* + g_{z^*}, \Delta \rangle\\
    & = \langle g_{z^*} - g_{y^*}, \Delta \rangle + \langle \frac{1}{\lambda} AP_S^\perp A^\top z^*, \Delta \rangle - \underbrace{\langle \frac{1}{\lambda} A A^\top z^* + g_{z^*}, \Delta \rangle}_{\gre 0}\\
    & \less \langle g_{z^*} - g_{y^*}, \Delta \rangle + \langle \frac{1}{\lambda} AP_S^\perp A^\top z^*, \Delta \rangle.
\end{split}
\end{equation}
Strong $\mu$-smoothness of $f$ implies that the function $f^*$ is $\frac{1}{\mu}$-strongly convex. Hence, it follows that 
\begin{equation}
\label{eq:strongconvexitygradientinequality}
    \langle g_{z^*} - g_{y^*}, \Delta \rangle + \frac{1}{\mu} \|\Delta\|^2_2 \less 0. 
\end{equation}
Therefore, combining~\eqref{eq:strongconvexitygradientinequality} with the previous set of inequalities~\eqref{eq:intermediatesetofinequalities}, we get
\begin{equation*}
    \langle \frac{1}{\lambda} AP_S A^\top \Delta, \Delta \rangle + \frac{1}{\mu} \|\Delta\|^2_2 \less \langle \frac{1}{\lambda} AP_S^\perp A^\top z^*, \Delta \rangle,
\end{equation*}
and, multiplying both sides by $\lambda$,
\begin{equation}
\label{eq:mainintermediateinequality}
    \langle AP_S A^\top \Delta, \Delta \rangle + \frac{\lambda}{\mu} \|\Delta\|^2_2 \less \langle AP_S^\perp A^\top z^*, \Delta \rangle.
\end{equation}
By definition of $Z_f$ and since $\Delta \in \text{dom} f^* - z^*$, it holds that
\begin{equation*}
    \frac{\Delta^\top A P_S^\perp A^\top \Delta}{\|\Delta\|_2^2} \less Z_f^2,
\end{equation*}
which we can rewrite as
\begin{equation}
\label{eq:intermediatelowerboundZfDelta}
    \langle AP_S A^\top \Delta, \Delta \rangle \gre \langle A A^\top \Delta, \Delta \rangle - Z_f^2 \|\Delta\|_2^2.
\end{equation}
Hence, combining~\eqref{eq:intermediatelowerboundZfDelta} and~\eqref{eq:mainintermediateinequality}, we obtain
\begin{equation}
\label{eq:intermediateinequality}
    \left(\frac{\lambda}{\mu} - Z_f^2\right) \|\Delta\|_2^2 + \|A^\top \Delta\|_2^2 \less \langle A P_S^\perp A^\top z^*, \Delta \rangle,
\end{equation}
Under the assumption that $\lambda \gre 2 \mu Z_f^2$, it holds that $\lambda / \mu - Z_f^2 \gre \lambda/(2\mu)$. Thus,
\begin{equation*}
\begin{split}
    \left(\frac{\lambda}{\mu} - Z_f^2\right) \|\Delta\|_2^2 + \|A^\top \Delta\|_2^2 &\gre \frac{\lambda}{2\mu} \|\Delta\|_2^2 + \|A^\top \Delta\|_2^2\\
    &\gre \sqrt{\frac{2 \lambda}{\mu}} \|\Delta\|_2 \|A^\top \Delta\|_2.
\end{split}
\end{equation*}
where we used the fact that for any $a, b \gre 0$, $a + b \gre 2 \sqrt{ab}$, with $a = \frac{\lambda}{2\mu}\|\Delta\|_2^2$ and $b = \|A^\top \Delta\|_2^2$. Combining the former inequality with inequality~\eqref{eq:intermediateinequality}, we obtain
\begin{equation}
\label{eq:intermediateinequality2}
    \sqrt{\frac{2 \lambda}{\mu}} \|\Delta\|_2 \|A^\top \Delta\|_2 \less \langle A P_S^\perp A^\top z^*, \Delta \rangle.
\end{equation}
The right-hand side of the latter inequality can be bounded as
\begin{equation*}
\begin{split}
    \langle A P_S^\perp A^\top z^*, \Delta \rangle &= \langle A^\top z^*, P_S^\perp A^\top \Delta \rangle\\
    &\underset{(i)}{\less} \|A^\top z^*\|_2 \|P_S^\perp A^\top \Delta\|_2 \\
    & \less \|A^\top z^*\|_2 \|\Delta\|_2 \sup_{\Delta^\prime \in \text{dom}f^* - z^*} \left(\frac{\|P_S^\perp A^\top \Delta^\prime\|_2}{|\Delta^\prime\|_2}\right)\\
    & \underset{(ii)}{=} \|A^\top z^*\|_2 \|\Delta\|_2 Z_f.
\end{split}
\end{equation*}
where $(i)$ follows from Cauchy-Schwarz inequality, and $(ii)$ holds by definition of $Z_f$. Thus, inequality~\eqref{eq:intermediateinequality2} becomes
\begin{equation}
\label{eq:intermediateinequality3}
        \sqrt{\frac{2 \lambda}{\mu}} {\|\Delta\|}_2 {\|A^\top \Delta\|}_2 \less {\|A^\top z^*\|}_2 {\|\Delta\|}_2 Z_f.
\end{equation}
From Propositions~\ref{prop:fenchelduality} and~\ref{prop:fencheldualitysketchedprogram}, we have that $A^\top z^* = - \lambda x^*$ and $y^* = \nabla f(AS\alpha^*)$. By definition of $\wtilde{x}$, it follows that $A^\top \Delta = -\lambda\left(\wtilde{x}-x^*\right)$. Then, rearranging inequality~\eqref{eq:intermediateinequality3}, 
\begin{equation*}
    \|\wtilde{x}-x^*\|_2 \less \sqrt{\frac{\mu}{2 \lambda}} Z_f \|x^*\|_2,
\end{equation*}
which is exactly the desired regret bound~\eqref{eq:mainbound}. The regret bound~\eqref{eq:mainbound2} immediately follows from the fact that $Z_f \less {\|P_S^\perp A^\top\|}_2$.

\subsection{Proof of Corollary~\ref{corollary:main}}
\label{app:proofcorollary1}

The proof combines our deterministic regret bound~\eqref{eq:mainbound2}, along with the following result, which is a re-writing of Corollary 10.9, in~\cite{halko2011}.
\begin{lemma}
\label{lemma:concentrationPASA}
Let $k \gre 2$ be a target rank and $m \gre 1$ a sketching dimension such that $k < m \less \min(n,d)$. Let $\wtilde{S}$ be an $n \times m$ random matrix with iid Gaussian entries. Define the oversampling ratio $r = (m-k)/k$, and the sketching matrix $S=A^\top \wtilde{S}$. Then, provided $r k \gre 4$, it holds with probability at least $1 - 6 e^{-r k}$ that
\begin{equation}
\label{eq:concentrationPASA}
   {\|P_{S}^\perp A^\top\|}_2 \less \frac{c_0}{\sqrt{2}} \sqrt{\frac{r+1}{r}} \left( \sigma_{k+1}^2 + \frac{1}{rk} \sum_{j = k+1}^{\min(n,d)} \sigma_j^2 \right)^\frac{1}{2}. 
\end{equation}
where $\sigma_1 \gre \sigma_2 \gre \hdots $ are the singular values of $A$, and the universal constant $c_0$ satisfies $c_0 \less 36$. In particular, if $m=2k$, then it holds with probability at least $1-6e^{-k}$ that
\begin{equation}
\label{eq:concentrationPASAsimple}
\begin{split}
    {\|P_{S}^\perp A^\top\|}_2 &\less c_0 \left( \sigma_{k+1}^2 + \frac{1}{k} \sum_{j = k+1}^{\min(n,d)} \sigma_j^2 \right)^\frac{1}{2}\\
    &= c_0 R_k(A).
\end{split}
\end{equation}
\end{lemma}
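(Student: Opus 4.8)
The plan is to recognize $\|P_S^\perp A^\top\|_2$ as the spectral-norm error incurred by a Gaussian randomized range finder applied to $A^\top$, and then to translate Corollary 10.9 of~\cite{halko2011} into the stated form. First I would note that with $S = A^\top \wtilde{S}$ the column space of $S$ coincides with $\mathrm{range}(A^\top \wtilde{S})$, the sample range obtained by hitting the matrix $M := A^\top$ with the standard Gaussian test matrix $\wtilde{S}$. Consequently $P_S$ is the orthogonal projector onto that sample range, so $\|P_S^\perp A^\top\|_2 = \|(I - P_S)M\|_2$ is exactly the quantity controlled by the randomized SVD analysis of~\cite{halko2011}. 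Since $M = A^\top$ and $A$ have identical singular values $\sigma_1 \geq \sigma_2 \geq \cdots$, the resulting bound is naturally phrased in terms of the $\sigma_j$.

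Next I would invoke Corollary 10.9 with target rank $k$ and oversampling parameter $p := m - k = rk$; the hypothesis $rk \geq 4$ is precisely the requirement $p \geq 4$ there. That corollary produces, for free deviation parameters $t, u \geq 1$, a two-term estimate of the shape $\|(I-P_S)M\|_2 \leq C_1\,\sigma_{k+1} + C_2\,\big(\sum_{j=k+1}^{\min(n,d)}\sigma_j^2\big)^{1/2}$, where $C_1$ and $C_2$ are explicit functions of $t, u, k, p$, and the bound fails with probability at most $5t^{-rk} + 2e^{-u^2/2}$. I would then fix $t = e$ and $u = \sqrt{2rk + 2\ln 2}$, which makes this failure probability equal to $6e^{-rk}$ and freezes $C_1, C_2$ into expressions depending only on $r$ (and, through lower-order terms, on $k$).

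The heart of the argument — and the step I expect to be most delicate — is consolidating the two-term bound into the single square-root form claimed. Applying the elementary inequality $a + b \leq \sqrt{2}\,(a^2 + b^2)^{1/2}$ with $a = C_1\sigma_{k+1}$ and $b = C_2(\sum_{j>k}\sigma_j^2)^{1/2}$, and using $p = rk$ to rewrite $C_2^2$ as a constant multiple of $\tfrac{1}{rk}\tfrac{r+1}{r}$ and $C_1^2$ as a constant multiple of $\tfrac{r+1}{r}$, I would factor out $\tfrac{1}{\sqrt{2}}\sqrt{(r+1)/r}$ and arrive at $\tfrac{c_0}{\sqrt{2}}\sqrt{(r+1)/r}\big(\sigma_{k+1}^2 + \tfrac{1}{rk}\sum_{j>k}\sigma_j^2\big)^{1/2}$. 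The work here is the careful bookkeeping that collects every purely numerical factor — the $e$'s from Corollary 10.9, the $\sqrt{12}$, the $\sqrt{2}$, and the contribution of $u$ — into a single universal constant and verifies $c_0 \leq 36$; this is where the exact constants of Corollary 10.9 must be tracked, rather than merely their orders.

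Finally I would specialize to $m = 2k$, i.e., $r = 1$ and $p = k$: then $\tfrac{1}{\sqrt{2}}\sqrt{(r+1)/r} = 1$ and $\tfrac{1}{rk} = \tfrac{1}{k}$, so the bound reduces to $c_0\big(\sigma_{k+1}^2 + \tfrac{1}{k}\sum_{j>k}\sigma_j^2\big)^{1/2} = c_0 R_k(A)$, holding with probability at least $1 - 6e^{-k}$, which is the stated corollary case.
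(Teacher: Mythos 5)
Your proposal is correct and follows exactly the route the paper takes: the paper presents this lemma as nothing more than a rewriting of Corollary~10.9 of Halko--Martinsson--Tropp applied to $M=A^\top$ with test matrix $\wtilde{S}$ and oversampling $p=m-k=rk$, which is precisely your identification. Your additional bookkeeping (choosing $t=e$ and $u$ so the failure probability collapses to $6e^{-rk}$, then consolidating the terms via $a+b\less\sqrt{2}\,(a^2+b^2)^{1/2}$ and extracting the factor $\sqrt{(r+1)/r}$) is consistent with the stated constant $c_0\less 36$ and simply fills in details the paper leaves implicit.
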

From Theorem~\ref{th:main}, if $\lambda \gre 2\mu Z_f^2$, then
\begin{equation*}
    {\|\wtilde{x}-x^*\|}_2 \less \sqrt{\frac{\mu}{2\lambda}} {\|P_S^\perp A^\top \|}_2 {\|x^*\|}_2.
\end{equation*}
Hence, combining the latter inequality with Lemma~\ref{lemma:concentrationPASA}, provided $2 \less k \less \frac{1}{2} \min(n,d)$, $m=2k$ and $\lambda \gre 2\mu Z_f^2$, it holds with probability at least $1-6e^{-k}$ that
\begin{equation*}
    {\|\wtilde{x}-x^*\|}_2 \less c_0 \sqrt{\frac{\mu}{2\lambda}} R_k(A) {\|x^*\|}_2.
\end{equation*}
We want to establish the latter inequality, but under the condition $\lambda \gre 2 c^2_0 \mu R_k^2(A)$. But, by Lemma~\ref{lemma:concentrationPASA}, the condition $\lambda \gre 2 c^2_0 \mu R_k^2(A)$ implies that $\lambda \gre 2 \mu Z_f^2$ with probability at least $1-6e^{-k}$. By union bound, it follows that if $\lambda \gre 2 c^2_0 \mu R_k^2(A)$, then 
\begin{equation*}
    {\|\wtilde{x}-x^*\|}_2 \less c_0 \sqrt{\frac{\mu}{2\lambda}} R_k(A) {\|x^*\|}_2,
\end{equation*}
with probability at least $1-12e^{-k}$.

\subsection{Proof of Theorem~\ref{th:concentrationiterative}}
\label{app:prooftheorem2}

First, we show that for any $t \gre 0$, provided $\lambda \gre 2\mu Z_f^2$,
\begin{equation}
\label{eq:contractioninequality}
    {\|\wtilde{x}^{(t+1)} - x^*\|}_2 \less \sqrt{\frac{\mu}{2\lambda}}Z_f {\|\wtilde{x}^{(t)} - x^*\|}_2.
\end{equation}
It should be noted that for $t=0$, the latter inequality is exactly the regret bound~\eqref{eq:mainbound}. The proof for $t > 0$ follows similar steps.

Fix $t \gre 0$. Consider the optimization problem
\begin{equation}
\label{eq:primaliterative}
    \min_{\delta \in \mathbb{R}^d} f(A\delta + A\wtilde{x}^{(t)}) + \frac{\lambda}{2} {\|\delta + \wtilde{x}^{(t)}\|}_2^2.
\end{equation}
which is equivalent to the primal program~\eqref{eq:primal}, up to a translation of the optimization variable. Thus, the unique optimal solution of~\eqref{eq:primaliterative} -- which exists by strong convexity of the objective -- is given by $\delta^* = x^* - \wtilde{x}^{(t)}$. By Fenchel duality (Corollary~31.2.1, ~\cite{rockafellar2015convex}), it holds that
\begin{equation*}
    \min_{\delta} f(A\delta + A\wtilde{x}^{(t)}) + \frac{\lambda}{2} {\|\delta + \wtilde{x}^{(t)}\|}_2^2 = \max_{z} - f^*(z) - \frac{1}{2\lambda}z^\top A A^\top z,    
\end{equation*}
and the optimal dual solution $z^*$ exists and is unique (by strong concavity of the dual objective). Further, by the Karush-Kuhn-Tucker conditions (Theorem~31.3,~\cite{rockafellar2015convex}), we have 
\begin{equation*}
\begin{cases}
    \delta^* = - \wtilde{x}^{(t)} - \frac{1}{\lambda} A^\top z^*\\
    z^* = \nabla f(A\delta^* + A\wtilde{x}^{(t)}).
\end{cases}
\end{equation*}
Observe that, by using the change of variables $\alpha = \left(S^\top S\right)^{-\frac{1}{2}} \alpha_\dagger$, the optimization problem~\eqref{eq:intermediatesketchedprimal} can be rewritten as
\begin{equation*}
\begin{split}
    & \min_{\alpha \in \mathbb{R}^m} f(A S \alpha + A \wtilde{x}^{(t)}) + \lambda \alpha^\top S^\top \wtilde{x}^{(t)} + \frac{\lambda}{2} {\|S \alpha\|}_2^2\\
    \equiv & \min_{\alpha \in \mathbb{R}^m} f(A S \alpha + A \wtilde{x}^{(t)}) + \frac{\lambda}{2} {\|S \alpha + \wtilde{x}^{(t)}\|}_2^2.
\end{split}
\end{equation*} 
Let $\alpha_\dagger^{(t+1)}$ be the unique solution of~\eqref{eq:intermediatesketchedprimal}. Then, setting $\alpha^{(t+1)} = \left(S^\top S\right)^{-\frac{1}{2}} \alpha_\dagger^{(t+1)}$, we have
\begin{equation}
\label{eq:sketchedprimaliterativeanalysisform}
\begin{split}
    \alpha^{(t+1)} \in \argmin_{\alpha \in \mathbb{R}^m} f(AS \alpha + A \wtilde{x}^{(t)}) + \frac{\lambda}{2} {\|S \alpha  + \wtilde{x}^{(t)}\|}_2^2.
\end{split}
\end{equation}
By Fenchel duality, we get
\begin{equation*}
    \min_{\alpha} f(AS \alpha + A \wtilde{x}^{(t)}) + \frac{\lambda}{2} {\|S \alpha  + \wtilde{x}^{(t)}\|}_2^2 = \max_{y} - f^*(y) + y^\top A \wtilde{x}^{(t)} - \frac{1}{2\lambda}y^\top A P_S A^\top y + \frac{\lambda}{2}  {\wtilde{x}^{(t)}} P_S^\perp \wtilde{x}^{(t)}.
\end{equation*}
By strong concavity of the dual objective, there exists a unique maximizer $y^*$. Further, by the Karush-Kuhn-Tucker conditions (Theorem~31.3, ~\cite{rockafellar2015convex}), we have 
\begin{equation*}
AS\alpha^{(t+1)} + A\wtilde{x}^{(t)} \in \partial f^*(y^*)
\end{equation*}
and, thus, $y^* = \nabla f\left(AS\alpha^{(t+1)} + A\wtilde{x}^{(t)} \right)$. 

We define $\Delta = y^* - z^*$. Following similar steps as in the proof of Theorem~\ref{th:main}, we obtain
\begin{equation*}
\begin{split}
    \|A^\top \Delta\|_2^2 + \left(\frac{\lambda}{\mu} - Z_f^2\right) {\|\Delta\|}_2^2 &\less \langle A P_S^\perp (\lambda \wtilde{x}^{(t)} + A^\top z^*), \Delta \rangle\\
    &= -\lambda \langle AP_S^\perp \delta^*, \Delta \rangle.
\end{split}
\end{equation*}
Since $\lambda \gre 2 \mu Z_f^2$, it follows that $\lambda / \mu - Z_f^2 \gre \lambda / (2\mu)$. Using the fact that for any $a, b \gre 0$, we have $2 \sqrt{ab} \less a + b$, we obtain the inequality
\begin{equation*}
\begin{split}
     \sqrt{\frac{2\lambda}{\mu}} {\|A^\top \Delta\|}_2 {\|\Delta\|}_2 &\less -\lambda \langle AP_S^\perp \delta^*, \Delta \rangle\\
     & \less \lambda {\|\delta^*\|}_2 Z_f {\|\Delta\|}_2.
\end{split}
\end{equation*}
Dividing both sides by $\lambda {\|\Delta\|}_2$ and using the identities $\delta^* = x^* - \wtilde{x}^{(t)}$ and $A^\top \Delta / \lambda = x^* - \wtilde{x}^{(t+1)}$, we obtain the desired contraction inequality
\begin{equation*}
    {\|\wtilde{x}^{(t+1)} - x^*\|}_2 \less \sqrt{\frac{\mu}{2\lambda}}Z_f {\|\wtilde{x}^{(t)} - x^*\|}_2.
\end{equation*}
By induction, it immediately follows that for any number of iterations $T \gre 1$, 
\begin{equation*}
    {\|\wtilde{x}^{(T)} - x^*\|}_2 \less \left(\frac{\mu}{2\lambda} Z_f^2\right)^{\frac{T}{2}} {\|x^*\|}_2.
\end{equation*}

The high-probability version follows by immediate application of Lemma~\ref{lemma:concentrationPASA} to the previous inequality.

\subsection{Proof of Theorem~\ref{th:mainkernel}}

Define
\begin{equation*}
    x^* = \argmin_x f(K^\frac{1}{2}x) + \frac{\lambda}{2} \|x\|_2^2.
\end{equation*}
Set $\wtilde{x} = - \frac{1}{\lambda} {K^\frac{1}{2}} \nabla f(K \wtilde{S}\alpha^*)$, and
\begin{equation*}
    Z_f = Z_f\left(K^\frac{1}{2}, K^\frac{1}{2} \wtilde{S}\right).
\end{equation*}
Then, by application of Theorem~\ref{th:main} with $S=K^\frac{1}{2}\wtilde{S}$, it holds that ${\|\wtilde{x}-x^*\|}_2 \less \sqrt{\mu/(2 \lambda)} Z_f {\|x^*\|}_2$, provided that $\lambda \gre 2\mu Z_f^2$. We conclude by using the facts that $\wtilde{x} = K^\frac{1}{2}\wtilde{w}$ and $x^* = K^\frac{1}{2}w^*$. 
\section{Proofs of intermediate results}
\label{app:proofs}

\subsection{Proposition~\ref{prop:fenchelduality} -- Strong duality and Karush-Kuhn-Tucker conditions of the primal objective~\eqref{eq:primal}}

Denote $g(x) = \frac{\lambda}{2}\|x\|_2^2$. The functions $f$ and $g$ are proper, closed, convex and their domains are respectively equal to $\mathbb{R}^n$ and $\mathbb{R}^d$. It is then trivial that for any $x \in \mathbb{R}^d$, we have $x \in \text{dom}(g)$ and $Ax \in \text{dom}(f)$. Hence, all conditions to apply strong Fenchel duality results hold (Theorem~31.2,~\cite{rockafellar2015convex}). Using the fact that $g^*(z) = \frac{1}{2 \lambda} \|z\|_2^2$, we get
\begin{equation*}
    \inf_x f(Ax) + \frac{\lambda}{2} \|x\|_2^2 = \sup_z - f^*(z) - \frac{1}{2\lambda} \|A^\top z\|_2^2,
\end{equation*}
and the supremum is attained for some $z^* \in \text{dom}(f^*)$. The uniqueness of $z^*$ follows from strong concavity of the dual objective, which comes from the $\frac{1}{\mu}$-strong convexity of $f^*$. 

Further, the primal objective is also strongly convex over $\mathbb{R}^d$, which implies the existence and uniqueness of a minimizer $x^*$. 

The Karush-Kuhn-Tucker conditions (Theorem~31.3,~\cite{rockafellar2015convex}) imply that $Ax^* \in \partial f^*(z^*)$. Since $\partial f^* = (\nabla f)^{-1}$ (Theorem~23.5,~\cite{rockafellar2015convex}), it follows that $z^* = \nabla f(Ax^*)$. Finally, by first-order optimality conditions of $x^*$, we have that $A^\top \nabla f(Ax^*) + \lambda x^* = 0$, i.e., $x^* = -\lambda^{-1} A^\top z^*$.

\subsection{Proposition~\ref{prop:fencheldualitysketchedprogram} -- Strong duality and Karush-Kuhn-Tucker conditions of the sketched primal objective~\eqref{eq:primal}}

Denote $g(\alpha) = \frac{\lambda}{2}\|S\alpha\|_2^2$. The functions $f$ and $g$ are proper, closed, convex and their domains are respectively equal to $\mathbb{R}^n$ and $\mathbb{R}^m$. It is then trivial that for any $\alpha \in \mathbb{R}^m$, we have $\alpha \in \text{dom}(g)$ and $AS\alpha \in \text{dom}(f)$. Hence, all conditions to apply strong Fenchel duality results hold (Theorem~31.2,~\cite{rockafellar2015convex}). Using the fact that $g^*(y) = \frac{1}{2 \lambda} y^\top (S^\top S)^\dagger y$, we get
\begin{equation*}
    \inf_\alpha f(AS\alpha) + \frac{\lambda}{2} \|S\alpha\|_2^2 = \sup_y - f^*(y) - \frac{1}{2\lambda} \|P_S A^\top y\|_2^2,
\end{equation*}
and the supremum is attained for some $y^* \in \text{dom}(f^*)$. The uniqueness of $y^*$ follows from strong concavity of the dual objective, which comes from the $\frac{1}{\mu}$-strong convexity of $f^*$. 

We establish the existence of a minimizer $\overline{\alpha}$ of $\alpha \mapsto f(AS\alpha) + \frac{\lambda}{2}\|S\alpha\|_2^2$. The latter function is strongly convex over the subspace $\left(\text{Ker}S\right)^\perp$. Thus, there exists a unique minimizer $\overline{\alpha}$ over $\left(\text{Ker}S\right)^\perp$. Then, for any $\alpha \in \mathbb{R}^m$, writing $\alpha = \alpha_\perp + \alpha_\parallel$ where $\alpha_\perp \in \text{Ker}(S)^\perp$ and $\alpha_\parallel \in \text{Ker}(S)$, we have
\begin{equation*}
\begin{split}
f(AS\alpha) + \frac{\lambda}{2} \|S\alpha\|_2^2 &= f(AS\alpha^\perp)+\frac{\lambda}{2}\|S\alpha^\perp\|_2^2\\ &\gre f(AS\overline{\alpha}) + \frac{\lambda}{2} \|S\overline{\alpha}\|^2_2. 
\end{split}
\end{equation*}
Thus, the point $\overline{\alpha}$ is a minimizer.

Let $\alpha^*$ be any minimizer. The Karush-Kuhn-Tucker conditions (Theorem~31.3,~\cite{rockafellar2015convex}) imply that $AS\alpha^* \in \partial f^*(y^*)$. Since $\partial f^* = (\nabla f)^{-1}$ (Theorem~23.5,~\cite{rockafellar2015convex}), it follows that $y^* = \nabla f(AS\alpha^*)$.

\subsection{Proof of Proposition~\ref{prop:numericalconditioning} -- Numerical conditioning of the re-scaled sketched program}
\label{app:proofnumericalconditioning}

The condition number $\kappa_\dagger$ of the re-scaled sketched program is equal to
\begin{equation*}
    \kappa_\dagger = \frac{\sup_{\alpha} \lambda + \sigma_{1}\left( \mathcal{I}_S^\top A^\top \nabla^2 f(A S \alpha) \mathcal{I}_S \right)}{\inf_{\alpha} \lambda + \sigma_{m}\left(\mathcal{I}_S^\top A^\top \nabla^2 f(A S \alpha) \mathcal{I}_S \right)},
\end{equation*}
where $\mathcal{I}_S = S (S^\top S)^{-\frac{1}{2}}$. 

In order to show that $\kappa_\dagger \less \kappa$, it suffices to upper bound the numerator in the definition of $\kappa_\dagger$ by the numerator of $\kappa$ and to lower bound the denominator of $\kappa_\dagger$ by the denominator of $\kappa$, i.e., it suffices to show that 
\begin{align*}
    & \sup_\alpha \sigma_{1}\left( \mathcal{I}_S^\top A^\top \nabla^2 f(A S \alpha) A \mathcal{I}_S \right) \less \sup_x \sigma_1\left(A^\top \nabla^2 f(Ax) A\right),\\
    & \inf_\alpha \sigma_{m}\left( \mathcal{I}_S^\top A^\top \nabla^2 f(A S \alpha)A \mathcal{I}_S \right) \gre \inf_x \sigma_d\left(A^\top \nabla^2 f(Ax) A\right).
\end{align*}
By the trivial inclusion $\left\{ S\alpha \mid \alpha \in \mathbb{R}^m \right\} \subseteq \mathbb{R}^d$, it holds that 
\begin{align*}
     & \sup_\alpha \sigma_{1}\left( \mathcal{I}_S^\top A^\top \nabla^2 f(A S \alpha) A \mathcal{I}_S \right) \less \sup_x \sigma_{1}\left( \mathcal{I}_S^\top A^\top \nabla^2 f(Ax) A \mathcal{I}_S \right),\\
    & \inf_\alpha \sigma_{m}\left(\mathcal{I}_S^\top A^\top \nabla^2 f(A S \alpha)A \mathcal{I}_S \right) \gre \inf_x \sigma_{m}\left(\mathcal{I}_S^\top A^\top \nabla^2 f(Ax)A \mathcal{I}_S \right).
\end{align*}
Therefore, to establish that $\kappa_\dagger \less \kappa$, it is sufficient to show that for any $x \in \mathbb{R}^d$,
\begin{align*}
    & \sigma_{1}\left( \mathcal{I}_S^\top A^\top \nabla^2 f(A x) A\mathcal{I}_S \right) \less \sigma_1\left(A^\top \nabla^2 f(Ax) A\right),\\
    & \sigma_{m}\left( \mathcal{I}_S^\top A^\top \nabla^2 f(Ax) A \mathcal{I}_S \right) \gre \sigma_d\left(A^\top \nabla^2 f(Ax) A\right).
\end{align*}
The first inequality follows from the fact that $\|\mathcal{I}_S\|_2 \less 1$. Hence, we have
\begin{equation*}
\begin{split}
    \sigma_{1}\left( \mathcal{I}_S^\top A^\top \nabla^2 f(A x) A\mathcal{I}_S \right) &= \sup_{w \neq 0} \frac{w^\top \mathcal{I}_S^\top A^\top \nabla^2 f(A x) A \mathcal{I}_S w}{\|w\|_2}\\
    &= \sup_{w \neq 0} \frac{(\mathcal{I}_S w)^\top A^\top \nabla^2 f(A x) A (\mathcal{I}_S w)}{\|\mathcal{I}_S w\|_2} \underbrace{\frac{\|\mathcal{I}_S w\|_2}{\|w\|_2}}_{\less 1} \\
    & \less \sup_{z \neq 0} \frac{z^\top A^\top \nabla^2 f(A x) A z}{\|z\|_2}\\
    &= \sigma_1\left(A^\top \nabla^2 f(Ax) A\right).
\end{split}
\end{equation*}
For the second inequality, we distinguish two cases. 

If the sketching matrix $S \in \mathbb{R}^{d \times m}$ is full-column rank, then, the matrix $\mathcal{I}_S$ is actually an isometry, i.e., for any $w \in \mathbb{R}^m$, we have $\|S (S^\top S)^{-\frac{1}{2}} w\|_2 = \|w\|_2$, which implies that 
\begin{equation*}
\begin{split}
    \sigma_{m}\left( \mathcal{I}_S^\top A^\top \nabla^2 f(A x) A\mathcal{I}_S \right) &= \inf_{w \neq 0} \frac{w^\top \mathcal{I}_S^\top A^\top \nabla^2 f(A x) A \mathcal{I}_S w}{\|w\|_2}\\
    &= \inf_{w \neq 0} \frac{(\mathcal{I}_S w)^\top A^\top \nabla^2 f(A x) A (\mathcal{I}_S w)}{\|\mathcal{I}_S w\|_2} \underbrace{\frac{\|\mathcal{I}_S w\|_2}{\|w\|_2}}_{= 1} \\
    & \gre \inf_{z \neq 0} \frac{z^\top A^\top \nabla^2 f(A x) A z}{\|z\|_2}\\
    &= \sigma_d\left(A^\top \nabla^2 f(Ax) A\right).
\end{split}
\end{equation*}
Suppose now that the sketching matrix $S \in \mathbb{R}^{d \times m}$ is not full column-rank. By assumption, $S = A^\top \wtilde{S}$ where $\wtilde{S} \in \mathbb{R}^{n \times m}$ is Gaussian iid, hence, full-column rank almost surely. It implies that there exists a vector $v \neq 0$ such that $A v =0$. Indeed, let $v \neq 0$ be a vector such that $S^\top v = 0$, which exists since $m < d$. The equation $S^\top v = 0$ can be rewritten as $\wtilde{S}^\top A v = 0$. Since $\wtilde{S}^\top$ is full row-rank, we get that $Av=0$, i.e., $\text{Ker} A \neq \{0\}$.

From $\text{Ker} A \neq \{0\}$, we get $\sigma_d\left(A^\top \nabla^2 f(Ax) A\right) = 0$ and $\sigma_{m}\left( \mathcal{I}_S^\top A^\top \nabla^2 f(Ax) A \mathcal{I}_S \right)=0$, which concludes the proof.
\section{Proof of bounds in Table~\ref{tab:comparisonsketchingdimensions}}
\label{app:proofboundsdecays}

\subsection{Adaptive Gaussian sketching}

From Corollary~\ref{corollary:main}, we have that for a target rank $k$ and a sketching dimension $m_A=2k$, with probability at least $1-12e^{-k}$,
\begin{equation*}
    \frac{\|\wtilde{x}-x^*\|_2}{\|x^*\|_2} \lesssim \lambda^{-\frac{1}{2}} \left(\nu_k + \frac{1}{k} \sum_{j=k+1}^\rho \nu_j \right)^{\frac{1}{2}}.
\end{equation*}
For a matrix $A$ with rank $\rho \ll \min(n,d)$, with $k \gre \rho + 1$, the right hand side of the latter equation is equal to $0$. In order to achieve this with probability at least $1-\eta$, it is sufficient to oversample by an amount $\log(12/\eta)$, that is, $m_A=\rho+1+\log(12/\eta)$ is sufficient to achieve a $(\varepsilon=0, \eta)$-guarantee.

For a $\kappa$-exponential decay with $\kappa > 0$, we have $\nu_j \sim e^{-\kappa j}$, and
\begin{equation*}
    \left(\nu_k + \frac{1}{k} \sum_{j=k+1}^\rho \nu_j\right)^{\frac{1}{2}} \sim e^{-\kappa (k+1)/2},
\end{equation*}
and it is sufficient for the sketching dimension $m_A$ to satisfy
\begin{equation*}
    m_A \gtrsim \kappa^{-1} \log\left( \frac{1}{\lambda \varepsilon} \right) + \log\left(\frac{12}{\eta}\right).
\end{equation*}
For a $\beta$-polynomial decay with $\beta > 1/2$, we have $\nu_j \sim j^{-2\beta}$ and 
\begin{equation*}
    \left(\nu_k + \frac{1}{k} \sum_{j=k+1}^\rho \nu_j\right)^{\frac{1}{2}} \sim k^{-\beta},
\end{equation*}
and it is sufficient to have
\begin{equation*}
    m_A \gtrsim \lambda^{-\frac{1}{2\beta}}\varepsilon^{-\frac{1}{\beta}} + \log\left(\frac{12}{\eta}\right).
\end{equation*}

\subsection{Oblivious Gaussian sketching} 

For a $\rho$-rank matrix $A$, it has been shown in~\cite{zhang2013recovering} that, provided the sketching dimension $m_O$ satisfies
\begin{equation*}
    m_O \gtrsim \frac{(\rho+1)\log\left(2\rho/\eta\right)}{\varepsilon^2},
\end{equation*}
then,
\begin{equation*}
    \frac{\|\wtilde{x}-x^*\|_2}{\|x^*\|_2} \lesssim \varepsilon,
\end{equation*}
with probability at least $1-\eta$, for any $\varepsilon \in (0,\frac{1}{2})$.

We now justify the bounds for the $\kappa$-exponential and $\beta$-polynomial decays. Let $\overline{\rho}$ be the effective rank of the matrix $AA^\top$, defined as 
\begin{equation*}
    \overline{\rho} = \sum_{i=1}^\rho \frac{\nu_i}{\lambda + \nu_i}.
\end{equation*}
In~\cite{zhang2013recovering}, the authors have shown that, provided 
\begin{equation*}
    m_O \gtrsim \frac{\overline{\rho}}{\varepsilon^2 (\lambda+1)} \log\left(\frac{2d}{\eta}\right),
\end{equation*}
then the relative error satisfies
\begin{equation*}
    \frac{\|\wtilde{x}-x^*\|_2}{\|x^*\|_2} \lesssim \varepsilon \left(1+\sqrt{\frac{\lambda}{\nu_k}}\right),
\end{equation*}
with probability at least $1-\eta$, under the additional condition that the minimizer $x^*$ lies in the subspace spanned by the top $k$-left singular vectors of $A$. For simplicity of comparison, we neglect the latter (restrictive) requirement on $x^*$, and the term $\sqrt{\lambda/\nu_k}$ in the latter upper bound, which yields a smaller lower bound on a sufficient sketching size $m_O$ to achieve a $(\varepsilon, \eta)$-guarantee. Based on those simplifications, oblivious Gaussian sketching yields a relative error such that
\begin{equation*}
    \frac{\|\wtilde{x}-x^*\|_2}{\|x^*\|_2} \less \varepsilon,
\end{equation*}
with probability at least $1-\eta$, provided that $m_O \gre \overline{\rho} \varepsilon^{-2} \log\left(\frac{2d}{\eta}\right)$.

For a $\kappa$-exponential decay, it holds that
\begin{equation*}
\begin{split}
    \overline{\rho} = \sum_{i=1}^{\rho} \frac{e^{-\kappa i}}{e^{-\kappa i}+\lambda}
    \gre \int_1^{\rho} \frac{e^{-\kappa t}}{e^{-\kappa t}+\lambda} dt
    = \int_{e^{-\kappa \rho}}^{e^{-\kappa}} \frac{1}{\kappa} \frac{1}{u+\lambda} du
    = \frac{1}{\kappa} \log\left(\frac{e^{-\kappa}+\lambda}{e^{-\kappa \rho} + \lambda}\right).
\end{split}
\end{equation*}
Since $\lambda \in (\nu_\rho, 1) = (e^{-\kappa \rho}, 1)$, it follows that 
\begin{equation*}
    \overline{\rho} \gtrsim \kappa^{-1} \log \frac{1}{\lambda}.
\end{equation*}
Hence, their theoretical predictions state that the sketching dimension $m_O$ must be greater than
\begin{equation*}
    m_O \gtrsim \kappa^{-1} \varepsilon^{-2} \log\left(\frac{1}{\lambda}\right) \log\left(\frac{2d}{\eta}\right)
\end{equation*}
in order to achieve a $(\varepsilon, \eta)$-guarantee.

For a $\beta$-polynomial decay, it holds that
\begin{equation*}
\begin{split}
    \overline{\rho} = \sum_{i=0}^{\rho} \frac{1}{1+\lambda i^{2\beta}}
    \geq -1 + \int_{0}^{\rho} \frac{1}{1+\lambda t^{2\beta}} dt
    & = -1 + \frac{\lambda^{-\nicefrac{1}{2\beta}}}{2 \beta}\int_{0}^{\lambda \rho^{2\beta}} \frac{u^{\frac{1}{2\beta}-1}}{1+u} du\\
    & \geq -1 + \frac{\lambda^{-\nicefrac{1}{2\beta}}}{2 \beta} \int_{0}^{1} \frac{u^{\frac{1}{2\beta}-1}}{1+u} du,
\end{split}
\end{equation*}
where the last inequality is justified by the fact that the integrand is non-negative, and the fact that $\lambda \gre \rho^{-2\beta}$. Since the integral is finite and independent of $\lambda$, it follows that
\begin{equation*}
    \overline{\rho} \gtrsim \lambda^{-\frac{1}{2\beta}},    
\end{equation*}
and the sketching dimension $m_O$ must satisfy 
\begin{equation*}
    m_O \gtrsim \lambda^{-\frac{1}{2\beta}} \varepsilon^{-2} \log\left(\frac{2d}{\eta}\right)
\end{equation*}
in order to achieve a $(\varepsilon, \eta)$-guarantee, according to their theoretical predictions.

\subsection{Leverage score column sampling.} 

Let $A = U \Sigma V^\top$ be a singular value decomposition of the matrix $A$, where $\Sigma = \text{diag}(\sigma_1, \sigma_2, \hdots, \sigma_\rho)$, and $\sigma_1 \gre \sigma_2 \gre \hdots \gre \sigma_\rho$. For a given target rank $k$, let $u_1, \hdots, u_k$ be the first $k$ columns of the matrix $U$, and denote $U_1 = [u_1, \hdots, u_k] \in \mathbb{R}^{n \times k}$. For $j=1,\hdots,n$, define $p_j = k^{-1} \|U_{1,j}\|_2^2$, where $U_{1, j}$ is the $j$-th row of the matrix $U_1$. By orthonormality of the family $(u_1, \hdots, u_k)$, it holds that $\sum_{j=1}^n p_j = 1$, and $p_j \gre 0$. The family $\{p_j\}_{j=1}^n$ is called the leverage score probability distribution of the Gram matrix $A A^\top$. 

Leverage based column sampling consists in, first, computing the exact or approximated leverage score distribution of the matrix $AA^\top$, and, second, sampling $m$ columns of $AA^\top$ from the latter probability distribution, with replacement. Precisely, the sketching matrix $S \in \mathbb{R}^{d \times m}$ is given as
\begin{equation*}
    S = A^\top R D,
\end{equation*}
where $R \in \mathbb{R}^{n \times m}$ is a column selecting matrix drawn according to the leverage scores, and $D \in \mathbb{R}^{m \times m}$ is a diagonal rescaling matrix, with $D_{jj} = \left(m p_i\right)^{-\frac{1}{2}}$, if $R_{ij} = 1$.

In order to compare the theoretical guarantees of adaptive Gaussian sketching and leverage-based column sampling, we assume that the leverage scores are computed exactly. Note that if this is not the case, then the sketching size increases as the quality of approximation of the leverage score distribution decreases. As our primary goal is to lower bound the ratio $m_S / m_A$, our qualitative comparison is not affected (at least not in the favor of adaptive Gaussian sketching) by this assumption.

The authors of~\cite{gittens2016revisiting} showed that given $\delta \in (0,1]$, provided $m_S$ satisfies
\begin{equation*}
    m_S \gtrsim \delta^{-2} k \log\left(\frac{k}{\eta}\right),
\end{equation*}
then, with probability at least $1-\eta$,
\begin{equation*}
    \|P^\perp_{S} A^\top \|_2 \less \nu_k^{\frac{1}{2}} + \delta^2 \sum_{j=k+1}^\rho \nu_j^{\frac{1}{2}}.
\end{equation*}
Using the combination of the latter concentration bound with our deterministic regret bound~\eqref{eq:mainbound} on the relative error, it follows that, under the latter condition on $m_S$, with probability at least $1-\eta$,
\begin{equation*}
    \frac{\|\wtilde{x}-x^*\|_2}{\|x^*\|_2} \less \lambda^{-\frac{1}{2}} \left(\nu_k^{\frac{1}{2}} + \delta^2 \sum_{j=k+1}^\rho \nu_j^{\frac{1}{2}}\right).
\end{equation*}
For a matrix $A$ with rank $\rho \ll \min(n,d)$, if the sketching size $m_S$ is greater than $(\rho + 1) \log\left(\frac{\rho + 1}{\eta}\right)$, then the relative error satisfies an $(\varepsilon=0, \eta)$-guarantee.

For a $\kappa$-exponential decay, we have
\begin{equation*}
    \nu_k^{\frac{1}{2}} + \delta^2 \sum_{j=k+1}^\rho \nu_j^{\frac{1}{2}} \sim \left(1 + \frac{2\delta^2}{\kappa}\right) e^{-\kappa (k+1)/2}.
\end{equation*}
Taking $\delta = 1/2$, it follows that the sketching size $m_S$ must be greater than $\kappa^{-1} \log\left(\frac{1}{\lambda \varepsilon}\right) \log\left(\frac{1}{\eta}\right)$ to satisfy
\begin{equation*}
    \frac{\|\wtilde{x}-x^*\|_2}{\|x^*\|_2} \lesssim \varepsilon
\end{equation*}
with probability at least $1-\eta$.

For a $\beta$-polynomial decay (with $\beta > 1$), we have 
\begin{equation*}
    \nu_{k+1} + \delta^2 \sum_{j=k+1}^\rho \nu_j \sim k^{-\beta} + \delta^2 \beta^{-1} k^{1-\beta},
\end{equation*}
and, provided $m_S \gtrsim \delta^2 k \log\left(\frac{k}{\eta}\right)$,
\begin{equation*}
    \frac{\|\wtilde{x}-x^*\|_2}{\|x^*\|_2} \less \lambda^{-\frac{1}{2}} \left(k^{-\beta} + \delta^2 \beta^{-1} k^{1-\beta}\right).
\end{equation*}
To achieve a precision $\varepsilon$, it is sufficient to have
\begin{equation*}
    \varepsilon \gre \lambda^{-\frac{1}{2}} \left(k^{-\beta} + \delta^2 \beta^{-1} k^{1-\beta}\right)
\end{equation*}
Suppose first that we choose $\delta \lesssim k^{-\frac{1}{2}}$. Then, the latter sufficient condition becomes $\varepsilon \gtrsim \lambda^{-\frac{1}{2}} k^{-\beta}$. Hence, we need $k$ to be at least $\lambda^{-\frac{1}{2 \beta}} \varepsilon^{-\frac{1}{\beta}}$, which implies $m_S \gtrsim \delta^{-2} k \log\left(\frac{k}{\eta}\right)$. Since $\delta \lesssim k^{-\frac{1}{2}}$, we get that $m_S$ must be greater than $k^2 \log(1/\eta)$, which further implies
\begin{equation*}
    m_S \gtrsim \lambda^{-\frac{1}{\beta}} \varepsilon^{-\frac{2}{\beta}} \log\left(\frac{1}{\eta}\right).
\end{equation*}
Now, suppose that we choose $\delta \gtrsim k^{-\frac{1}{2}}$. Write $\delta^2 = k^{-1+\gamma}$, where $\gamma > 0$. Since $\delta < 1$, we must have $\gamma \in (0,1)$. Further, we need $\beta \varepsilon \gre \lambda^{-\frac{1}{2}} \delta^{2} k^{1-\beta} = \lambda^{-\frac{1}{2}} k^{\gamma - \beta}$. By assumption, $\beta > 1$, hence, $\gamma - \beta < 0$. Hence, the smallest value of $k$ that satisfies the latter inequality is given by 
\begin{equation*}
k = \left(\varepsilon^{-\frac{1}{\beta}} \lambda^{-\frac{1}{2\beta}}\right)^{\frac{1}{1-\frac{\gamma}{\beta}}}.    
\end{equation*}
On the other hand, the smallest sketching size $m_S$ to achieve an $(\varepsilon, \eta)$ satisfies 
\begin{equation*}
    m_S \gtrsim k^{2-\gamma} \log\left(\frac{1}{\eta}\right).
\end{equation*}
Plugging-in the value of $k$, we must have 
\begin{equation*}
    m_S \gtrsim \left(\varepsilon^{-\frac{1}{\beta}} \lambda^{-\frac{1}{2\beta}}\right)^{\frac{2-\gamma}{1-\frac{\gamma}{\beta}}} \log\left(\frac{1}{\eta}\right).
\end{equation*}
Optimizing over $\gamma \in (0,1)$, we finally obtain that the best sufficient sketching size must satisfy
\begin{equation*}
    m_S \gtrsim \left(\varepsilon^{-\frac{1}{\beta}} \lambda^{-\frac{1}{2\beta}}\right)^{\min(2,\frac{\beta}{\beta-1})} \log\left(\frac{1}{\eta}\right).
\end{equation*}

\section{Extension to the non-smooth case}

Here, we present some results to the case where the function $f : \mathbb{R}^n \to \mathbb{R}$ is proper, convex, but not necessarily smooth. We make the assumption that the function is $L$-Lipschitz, that is, for any $x, y \in \mathbb{R}^n$, 
\begin{equation*}
    \|f(x) - f(y)\|_2 \less L \|x-y\|_2.
\end{equation*}
In particular, this implies that the domain of the function $f^*$ is bounded, i.e., for any $z \in \text{dom} f^*$, it holds that $\|z\|_2 \less L$.

Let $x^*$ be the solution of the primal program~\eqref{eq:primal}, which exists and is unique by strong convexity of the primal objective. 

For a sketching matrix $S \in \mathbb{R}^{d \times m}$, the sketched primal program~\eqref{eq:sketchedprimal} admits a solution $\alpha^*$. Indeed, using arguments similar to the proof of Proposition~\ref{prop:fencheldualitysketchedprogram}, the sketched program is strongly convex over $\text{Ker}(S)^\perp$, and admits a unique solution $\alpha^*$ over that subspace. Further, for any $\alpha \in \mathbb{R}^m$, we can decompose $\alpha = \alpha_\perp + \alpha_\parallel$, with $\alpha_\perp \in \text{Ker}(S)^\perp$ and $\alpha_\parallel \in \text{Ker}(S)$. Then, 
\begin{equation*}
\begin{split}
    f(AS \alpha) + \frac{\lambda}{2} \|S\alpha\|_2^2 &= f(AS \alpha_\perp) + \frac{\lambda}{2} \|S\alpha_\perp\|_2^2\\
    & \gre f(AS \alpha^*) + \frac{\lambda}{2} \|S\alpha^*\|_2^2.
\end{split}
\end{equation*}
As for the smooth case, using convex analysis arguments, we obtain that the dual program~\eqref{eq:dual} has a solution $z^*$ which satisfies $z^* = \nabla f(Ax^*)$. Similarly, the sketched dual program~\eqref{eq:sketcheddual} has a solution $y^*$ which satisfies $y^*= \nabla f(AS\alpha^*)$. Further, by first-order optimality conditions of $x^*$, we have $x^* = -\lambda^{-1} A^\top \nabla f(Ax^*)$, i.e., $x^* = -\lambda^{-1} A^\top z^*$.

As for the smooth case, we introduce the candidate approximate solution $\wtilde{x}$, defined as 
\begin{equation*}
    \wtilde{x} = -\lambda^{-1} A^\top \nabla f(AS\alpha^*),
\end{equation*}
where $\alpha^*$ is any minimizer of~\eqref{eq:sketcheddual}.

\begin{theorem}
\label{th:concentrationapproximatesolution}
For any $\lambda > 0$, it holds that
\begin{align}
\label{eq:boundnonsmoothcase}
    &\|\wtilde{x} - x^*\|_2 \less \frac{6 L}{\lambda} \sqrt{\sigma_1 Z_f}.
\end{align}
where $\sigma_1$ is the top singular value of $A$.
\end{theorem}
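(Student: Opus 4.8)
The plan is to mirror the variational-inequality argument of the proof of Theorem~\ref{th:main}, but to replace the strong convexity of $f^*$ (unavailable here) by two weaker ingredients: the monotonicity of the subdifferential of $f^*$, and the boundedness of $\mathrm{dom}\,f^*$ implied by $L$-Lipschitzness of $f$. I start from the observation that $\wtilde{x} - x^* = -\lambda^{-1}A^\top(y^*-z^*)$, so writing $\Delta := y^*-z^*$ the goal reduces to bounding $\|A^\top\Delta\|_2$. Since $f$ is $L$-Lipschitz, every $z \in \mathrm{dom}\,f^*$ satisfies $\|z\|_2 \less L$; in particular $\|z^*\|_2 \less L$, $\|y^*\|_2 \less L$, and hence $\|\Delta\|_2 \less \|y^*\|_2 + \|z^*\|_2 \less 2L$.

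Next I reproduce the first-order optimality inequalities exactly as in the proof of Theorem~\ref{th:main}: fixing subgradients $g_{y^*}\in\partial f^*(y^*)$ and $g_{z^*}\in\partial f^*(z^*)$ (which exist because $y^*,z^*$ minimize $f^*$ plus a smooth quadratic), optimality of $y^*$ with feasibility of $z^*$ gives $\langle \lambda^{-1}AP_SA^\top y^* + g_{y^*},\Delta\rangle \less 0$, and symmetrically $\langle \lambda^{-1}AA^\top z^* + g_{z^*},\Delta\rangle \gre 0$. Telescoping exactly as in~\eqref{eq:intermediatesetofinequalities} yields $\langle \lambda^{-1}AP_SA^\top\Delta,\Delta\rangle \less \langle g_{z^*}-g_{y^*},\Delta\rangle + \langle\lambda^{-1}AP_S^\perp A^\top z^*,\Delta\rangle$. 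Here is the crucial departure from the smooth case: instead of strong convexity, I only invoke monotonicity of $\partial f^*$, which gives $\langle g_{z^*}-g_{y^*}, z^*-y^*\rangle \gre 0$, i.e. $\langle g_{z^*}-g_{y^*},\Delta\rangle \less 0$. Dropping this nonpositive term and multiplying by $\lambda$ leaves the clean inequality $\langle AP_SA^\top\Delta,\Delta\rangle \less \langle AP_S^\perp A^\top z^*,\Delta\rangle$.

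I then expand $\|A^\top\Delta\|_2^2 = \langle AP_SA^\top\Delta,\Delta\rangle + \|P_S^\perp A^\top\Delta\|_2^2$. The first term is controlled by the inequality just obtained, and the second by the definition of $Z_f$: since $\Delta \in \mathrm{dom}\,f^*-z^*$, one has $\|P_S^\perp A^\top\Delta\|_2 \less Z_f\|\Delta\|_2$. For the cross term I use $P_S^\perp$-orthogonality and Cauchy--Schwarz, $\langle AP_S^\perp A^\top z^*,\Delta\rangle = \langle P_S^\perp A^\top z^*, P_S^\perp A^\top\Delta\rangle \less \|A^\top z^*\|_2\,\|P_S^\perp A^\top\Delta\|_2 \less (\sigma_1 L)(2L Z_f)$, using $\|A^\top z^*\|_2 \less \sigma_1\|z^*\|_2 \less \sigma_1 L$. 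Combining with $\|P_S^\perp A^\top\Delta\|_2^2 \less Z_f^2\|\Delta\|_2^2 \less 4L^2Z_f^2$ and the elementary bound $Z_f \less \|P_S^\perp A^\top\|_2 \less \sigma_1$ gives $\|A^\top\Delta\|_2^2 \less 2\sigma_1 L^2 Z_f + 4L^2 Z_f^2 \less 6L^2\sigma_1 Z_f$. Taking square roots and dividing by $\lambda$ yields $\|\wtilde{x}-x^*\|_2 \less \lambda^{-1}\sqrt{6}\,L\sqrt{\sigma_1 Z_f}$, which is at least as strong as the claimed bound~\eqref{eq:boundnonsmoothcase} (the stated constant $6$ leaves room to spare).

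The main obstacle---and the reason the guarantee degrades from the sharp $Z_f$-scaling of Theorem~\ref{th:main} to the weaker $\sqrt{Z_f}$-scaling here---is precisely the loss of the quadratic term $\frac{1}{\mu}\|\Delta\|_2^2$. In the smooth case, strong convexity of $f^*$ let one keep $\|\Delta\|_2$ on the left-hand side and cancel it against $\|A^\top\Delta\|_2$ via the AM--GM step, producing a genuine contraction with $\varepsilon<1$. Without it I can no longer drive $\|\Delta\|_2 \to 0$, so the only available control is the crude a priori estimate $\|\Delta\|_2 \less 2L$ from boundedness of $\mathrm{dom}\,f^*$. The care in the argument lies in verifying that this substitution is quantitatively legitimate: that $\|z^*\|_2$ and $\|\Delta\|_2$ are exactly the quantities bounded by $L$ and $2L$, and that the cross term retains the factor $Z_f$ rather than the larger $\|P_S^\perp A^\top\|_2$, so that the spectral-tail improvement of adaptive sketching survives (if only under a square root).
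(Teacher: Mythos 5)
Your argument is correct and follows the paper's own proof in all essentials: the same variational inequality $\langle AP_SA^\top\Delta,\Delta\rangle \less \langle AP_S^\perp A^\top z^*,\Delta\rangle$ obtained by discarding the monotone term $\langle g_{z^*}-g_{y^*},\Delta\rangle\less 0$, the same a priori bounds $\|z^*\|_2\less L$ and $\|\Delta\|_2\less 2L$ from Lipschitzness, and the same use of $Z_f\less\sigma_1$. The only difference is bookkeeping---you bound $\|A^\top\Delta\|_2^2$ entirely by constants and take one square root at the end, rather than keeping $\|A^\top\Delta\|_2$ on both sides as the paper does---which in fact yields the slightly sharper constant $\sqrt{6}$ in place of $6$ (and, as a bonus, your Cauchy--Schwarz step on the cross term avoids any implicit assumption that $z^*$ itself lies in $\mathrm{dom}\,f^*-z^*$).
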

\begin{proof}
Let $\alpha^*$ be any minimizer of the sketched primal program. Following similar lines as in the proof of Theorem~\ref{th:main} (see Appendix~\ref{app:prooftheorem1}), it holds that

\begin{equation}
\label{eq:nonsmoothmaineq}
     \|A^\top \Delta\|_2^2 \leq Z_f^2 \|\Delta\|_2^2 + {z^*}^\top A P_{S}^\perp A^\top \Delta,
\end{equation}

where $\Delta = y^* - z^*$. After applying Cauchy-Schwarz and using the definition of $Z_f$, inequality~\eqref{eq:nonsmoothmaineq} becomes

\begin{equation*}
     \|A^\top \Delta\|_2^2 \leq Z_f^2 \|\Delta\|_2^2 + Z_f \|z^*\|_2 \|A^\top \Delta\|_2.
\end{equation*}

Using the fact that $\sqrt{w + w^\prime} \leq \sqrt{w} + \sqrt{w^\prime}$ with $w = Z_f^2 \|\Delta\|_2^2$ and $w^\prime = Z_f \|z^*\|_2 \|A^\top \Delta\|_2$, along with the inequality $\|A^\top \Delta\|_2 \leq \sigma_1 \|\Delta\|_2$, we obtain

\begin{equation*}
    \|A^\top \Delta\|_2 \leq \sqrt{Z_f} \left(\sqrt{Z_f} \|\Delta\|_2 + \sqrt{\|z^*\|_2 \|\Delta\|_2 \sigma_1}\right).
\end{equation*}

Using the inequality $2ww^\prime \leq w^2 + {w^\prime}^2$ and the fact that $Z_f \leq \sigma_1$, it follows that 

\begin{equation*}
    \|A^\top \Delta\|_2 \less 2 \sqrt{Z_f \sigma_1} \left( \|\Delta \|_2 + \|z^*\|_2\right)
\end{equation*}

Dividing by $\lambda$ and using the fact that $\wtilde{x} - x^* = -\lambda^{-1} A^\top \Delta$, we get

\begin{equation*}
    \|\widetilde{x} - x^*\|_2 \leq \frac{2}{\lambda} \sqrt{Z_f \sigma_1} \left( \|y^*-z^*\|_2 + \|z^*\|_2 \right).
\end{equation*}

Using the fact that ${\|y^*\|}_2, {\|z^*\|}_2 \less L$, we obtain the desired inequality~\eqref{eq:boundnonsmoothcase}.
\end{proof}
As for the smooth-case, high-probability bounds follow from the previous deterministic bound on the relative error.
\end{document}